\newcommand{\keywords}[1]{\par\addvspace\baselineskip
\noindent\keywordname\enspace\ignorespaces#1}
\begin{document}

\mainmatter  % start of an individual contribution

% first the title is needed
\title{Cayley Automatic Groups and Numerical Characteristics of Turing Transducers}

% a short form should be given in case it is too long for the running head
\titlerunning{Cayley Automatic Groups and Num. Characteristics of Turing Transducers}

% the name(s) of the author(s) follow(s) next
%
% NB: Chinese authors should write their first names(s) in front of
% their surnames. This ensures that the names appear correctly in
% the running heads and the author index.
%
\author{Dmitry Berdinsky 
} 
%\thanks{Please note that the LNCS Editorial assumes that all authors have used
%the western naming convention, with given names preceding surnames. This determines
%the structure of the names in the running heads and the author index.}%

%
\authorrunning{D. Berdinsky}

% (feature abused for this document to repeat the title also on left hand pages)

% the affiliations are given next; don't give your e-mail address
% unless you accept that it will be published
\institute{Department of Computer Science, The University of Auckland,\\
Private Bag 92019, Auckland 1142, New Zealand\\
\mailsa\\
}

%
% NB: a more complex sample for affiliations and the mapping to the
% corresponding authors can be found in the file "llncs.dem"
% (search for the string "\mainmatter" where a contribution starts).
% "llncs.dem" accompanies the document class "llncs.cls".
%

\toctitle{Cayley Automatic Groups and Numerical Characteristics of Turing Transducers}
\tocauthor{Dmitry~Berdinsky}
\maketitle
\setcounter{footnote}{0}

\begin{abstract}      

%The abstract should summarize the contents of the paper and should
%contain at least 70 and at most 150 words. It should be written using the
%\emph{abstract} environment.

 This paper is devoted to the problem of finding characterizations for Cayley automatic groups. 
 The concept of Cayley automatic groups was  recently introduced by Kharlampovich, Khoussainov and
 Miasnikov. 
 We address this problem by introducing three numerical characteristics of Turing transducers: 
 growth functions, F{\o}lner functions and average length growth functions. 
 These three numerical characteristics are the analogs of 
 growth functions, F{\o}lner functions and 
 drifts of simple random walks
 for Cayley graphs of groups. 
 We study these numerical characteristics for Turing transducers obtained 
 from automatic presentations of labeled directed graphs.

\keywords{Cayley automatic groups, Turing transducers,  
growth function, F{\o}lner function, random walk.}

\end{abstract}

\section{Introduction}
 
  This paper contributes to the field of automatic structures 
	\cite{BlumensathGradel04,BlumensathGradel00,Hodgson83,khoussainov2007three,KhoussainovNerode95,khoussainov2008open} 
	with particular emphasis on 
  Cayley automatic groups \cite{KKM11}. 
  Recall that a finitely generated group $G$ is called Cayley automatic if  
  for some set of generators $S$ the labeled directed Cayley graph 
  $\Gamma(G,S)$ is an automatic structure (or, FA--presentable).   
  All automatic groups in the sense of Thurston are Cayley automatic.   
  However, the class of Cayley automatic groups is considerably wider than 
  the class of automatic groups. 
  For example, all finitely generated 
  nilpotent groups of nilpotency 
  class at most two and all fundamental 
  groups of three--dimensional manifolds are Cayley automatic \cite{KKM11}.  
  The Baumslag--Solitar groups are Cayley automatic \cite{dlt14}.     
  Cayley  automatic groups retain the key algorithmic 
  properties which hold for automatic groups: 
  the word problem for Cayley automatic groups 
  is decidable in quadratic time, the conjugacy problem for 
  Cayley biautomatic groups is decidable, and   
  the first order theory for Cayley graphs of 
  Cayley automatic groups is decidable.

  Oliver and Thomas found a characterization of FA--presentable
  groups by showing that a finitely generated group 
  is FA--presentable if and only if it is virtually abelian 
  \cite{OliverThomas05}. Their result is based partly on the 
  celebrated Gromov's theorem on groups of polynomial growth. 
  But, the problem of finding characterizations for Cayley automatic 
  groups is more complicated, and it seems to require new approaches. 
	In this paper we address this problem by introducing some  numerical characteristics  
  for Turing transducers of the special class $\mathcal{T}$.

     In Section \ref{characterizationsection} we define the class 
     of Turing transducers $\mathcal{T}$. Then we show that 
     automatic presentations of Cayley graphs of  groups can be expressed 
		 in terms of Turing transducers of the class $\mathcal{T}$. 
     This explains why study of admissible asymptotic behavior 
     for some numerical characteristics of Turing transducers of the class $\mathcal{T}$
     is relevant to the problem of finding characterizations for Cayley automatic groups. 
     In Section \ref{numcharacterTuringtransducers} we introduce 
     three numerical characteristics 
     for Turing transducers of the class $\mathcal{T}$. 
     In this paper,  wreath products of groups are  
     used as the source of examples of Cayley automatic groups. 
     Therefore, in Section \ref{wreathproducts} we briefly recall 
		 basic  definitions for wreath products of groups.  
     In Section \ref{sectionExamples} we discuss asymptotic behavior 
		 of the numerical characteristics  of Turing  transducers of the class $\mathcal{T}$. 
     Section \ref{discussionSection} concludes the paper.

  \section{Turing Transducers of the Class $\mathcal{T}$
  	and Automatic Presentations of Labeled Directed Graphs}   
  \label{characterizationsection}
  
  Recall that a $(k+1)$--tape Turing  transducer $T$ for $k\geqslant 1$ 
	is a multi--tape Turing machine which has one input tape and $k$ output tapes.
  See, e.g., \cite[\S~10]{Medunabook} 
  for the definition of Turing transducers. 
  The  special class of 
  Turing transducers $\mathcal{T}$ that we consider in this paper is described as follows.     
  Let us be given a $(k+1)$--tape Turing transducer 
  $T \in \mathcal{T}$ and an input word $x \in \Sigma^*$. 
  Initially, the input word $x$ appears on the input tape, 
  the output tapes are completely blank and all heads 
  are over the leftmost cells.   
  First the heads of $T$ move synchronously from the left to the right 
	until the end of the input $x$. Then the heads make 
  a finite number of steps (probably no steps) further to the right, where this number of steps
  is bounded from above by some constant which depends on $T$.  
  After that, the heads of $T$ move synchronously from the 
  right to the left until it enters a final state with all heads over the leftmost cells.

  We say that $T$ accepts $x$ if $T$ enters an accepting state; 
  otherwise, $T$ rejects $x$. 
  Let $L \subseteq \Sigma^*$ be the set of inputs  accepted by $T$.  
  We say that $T$ translates 
  $x \in L$ into the outputs $y_1, \dots, y_k$ if for the word
  $x$ fed to $T$ as an input,  $T$ returns the 
  word $y_i$ on the $i$th output tape of $T$ for every $i=1,\dots,k$. 
  It is assumed that for every input $x \in L$, the output 
  $y_i \in L$ for every $i =1,\dots,k$. 
  Let $L' \subseteq L^k$ be the set
  of all $k$--tuples of outputs $(y_1, \dots,y_k)$.
  We say that $T$ translates $L$
  into $L'$.     
  
  For a given finite alphabet $\Sigma$   
  put $\Sigma_\diamond = \Sigma \cup \{\diamond \}$, where 
  $\diamond \notin \Sigma$. The convolution of 
  $n$ words $w_1,\dots, w_n \ \in \Sigma^*$ is the string 
  $w_1 \otimes \dots \otimes w_n$		      
  of length $\max\{|w_1|,\dots,|w_n|\}$ over the alphabet 
  $\Sigma_\diamond ^n$ defined as follows. 
  The $k$th symbol
  of the string is $(\sigma_1,\dots,\sigma_n )$, 
  where $\sigma_i$, $i=1,\dots,n$ is the $k$th symbol of 
  $w_i$ if $k \leqslant |w_i|$ and
  $\diamond$ otherwise. 
  The convolution $\otimes R$  of a $n$--ary relation 
  $R \subseteq \Sigma^{*n} $ is defined as
  $\otimes R = \{w_1 \otimes \dots \otimes w_n | 
  (w_1, \dots, w_n) \in R\}$.
  Recall that a $n$--tape synchronous finite automaton is
  a finite automaton over the alphabet 
  $\Sigma_\diamond  ^n \setminus \{(\diamond,\dots,\diamond)\}$. 
  Let $T \in \mathcal{T}$.
  Lemma \ref{fromtranstoautomataprop1} below shows 
  connection between Turing transducers of the class $\mathcal{T}$
	and multi--tape synchronous finite automata.  
  \begin{lemma}
  	\label{fromtranstoautomataprop1} 	
  	There exists a $(k+1)$--tape synchronous finite automaton 
		$\mathcal{M}$  such that a  convolution 
  	$x \otimes y_1 \otimes \dots \otimes y_k 
  	\in \Sigma_\diamond ^{(k+1)*}$ is accepted by $\mathcal{M}$
  	iff $T$ translates the input $x$ into the outputs $y_1,\dots,y_k$.      
  	In particular, the language $L$ is regular. 
  \end{lemma}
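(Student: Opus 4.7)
\medskip

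\noindent\textbf{Proof proposal.}
The plan is to simulate $T$ by a nondeterministic $(k+1)$-tape synchronous finite automaton $\mathcal{M}$ that reads the convolution $x \otimes y_1 \otimes \dots \otimes y_k$ in a single left-to-right sweep. The crucial feature of the class $\mathcal{T}$ is that all heads of $T$ move in lockstep, so the computation on $x$ decomposes cleanly at each cell into the action of the forward sweep, the action of the backward sweep, and (for the rightmost $O(c)$ cells, where $c$ is the constant bound on extra rightward steps) a finite turnaround. I intend to simulate the forward sweep directly with the left-to-right scan of $\mathcal{M}$, and to simulate the backward sweep by guessing the return-trip $T$-state at each cell and enforcing local consistency.

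Concretely, the state of $\mathcal{M}$ at position $i$ will be a triple $(q^F, q^B, \tau)$, where $q^F$ is the current forward-sweep state of $T$, $q^B$ is a guessed backward-sweep state at the present cell, and $\tau$ is a bounded auxiliary register used to encode the turnaround; since $c$ and the state set of $T$ are finite, $\tau$ ranges over a finite set. When $\mathcal{M}$ reads the convolution symbol $(x_i, y_{1,i}, \dots, y_{k,i})$ at position $i$ (interpreting $\diamond$ on any track as the blank symbol of $T$), it applies the $T$-transition from $q^F$ on input $x_i$ to determine the next forward state and the symbols $z_{1,i}, \dots, z_{k,i}$ that $T$ writes during the forward sweep, nondeterministically guesses a new backward state $q^{B\prime}$, and then checks that the $T$-transition from $q^{B\prime}$, reading $z_{1,i},\dots,z_{k,i}$ together with $x_i$, produces the state $q^B$ carried from position $i-1$ and writes exactly $y_{j,i}$ on the $j$-th output tape. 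Acceptance of $\mathcal{M}$ requires the forward trajectory to traverse the turnaround correctly (checked inside $\tau$) and the guessed backward trajectory to reach the leftmost cell in an accepting state of $T$.

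Correctness then follows because the conjunction of the cell-local forward and backward checks, together with the finite turnaround check in $\tau$, is equivalent to the existence of an accepting $T$-computation on $x$ that leaves $y_j$ on the $j$-th output tape -- precisely because each cell is touched exactly twice by $T$ under the lockstep discipline of $\mathcal{T}$. The step I expect to be the main obstacle is the turnaround phase: within the rightmost $c$ cells the end of the forward sweep, the extra rightward moves, and the first backward steps of $T$ are all compressed into a single visit of $\mathcal{M}$, so $\tau$ must encode a bounded block of $T$-computation rather than a single step. The case analysis is routine once a convention is fixed for how $T$ reads cells beyond $|x|$, but it is where almost all of the bookkeeping sits. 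Once $\mathcal{M}$ is constructed, the final clause of the lemma is immediate: $L$ is the projection of the synchronously recognizable relation $\otimes L'$ onto its first track, and such projections are regular.
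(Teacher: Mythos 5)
Your construction is correct, and at bottom it rests on the same decomposition the paper uses: a run of $T\in\mathcal{T}$ splits into a left-to-right sweep, a bounded turnaround, and a right-to-left sweep, each of which is a synchronous letter-by-letter process. The difference is in packaging. The paper's proof is modular: it treats the forward sweep and the (reversed) backward sweep as two synchronous relations and invokes closure of such relations under reversal and under composition with an existentially quantified middle component (the tape contents at the turnaround). You instead inline those closure proofs into a single nondeterministic product automaton, simulating the forward sweep deterministically and certifying the backward sweep by guessing its state at every cell and checking local consistency -- essentially a crossing-sequence argument, with the turnaround absorbed into a bounded register $\tau$. What the paper's route buys is brevity and reuse of standard facts; what yours buys is an explicit, self-contained automaton and a concrete state bound. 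Two small points to tighten: in the backward-sweep check the relevant input-track symbol is whatever the forward sweep \emph{left} in that cell (the forward transition may rewrite the input tape), not necessarily $x_i$ itself -- this is determined by $q^F$ and $x_i$, so it is a one-line fix; and for the final clause, $L$ is the projection onto the first track of the relation accepted by $\mathcal{M}$ (not of $\otimes L'$, which involves only the output tuples), which is regular since projections of synchronous relations are regular.
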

  \begin{proof}
  	The lemma can be obtained straightforwardly from the following two well known facts. 
  	The first fact is that the class of regular languages 
  	is closed under reverse. The second fact is as follows. 
  	Let the convolutions $\otimes R_1$ and  $\otimes R_2$ of two 
  	relations $R_1 = \{ (x, y) | x,y \in \Sigma^* \}$ and 
  	$R_2 = \{(y, z) | y, z \in \Sigma^* \}$ 
  	be accepted by two--tape synchronous finite automata. 
  	Then the convolution $\otimes R$ of the relation 
  	$R = \{(x,z) | \exists y [(x,y) \in R_1 \wedge (y,z) \in R_2 ] \}$   
  	is accepted by a two--tape synchronous finite automaton. $\Box$
  \end{proof}	
	 	  
  For a given $k$, put $\Sigma_k = \{1, \dots, k\}$. 
  Let $T \in \mathcal{T}$ be a $(k+1)$--tape Turing transducer 
  translating a language $L$ into $L' \subseteq L^k $.   
  We construct the labeled directed graph $\Gamma_T$ with the
  labels from $\Sigma_k$ as follows.  
  The set of vertices $V (\Gamma_T)$ is identified with $L$.  
  For given $u,v \in L$ there is an oriented edge $(u,v)$ 
  labeled by $j \in \Sigma_k$ if 
  $T$ translates $u$ into some outputs $w_1,\dots, w_k $ such that
  $w_j = v$. 
  It is easy to see that each vertex of
  the graph $\Gamma_T$ has $k$ outgoing edges 
  labeled by $1, \dots, k$.    
  
  Let $\Gamma$ be a labeled directed graph 
  for which every vertex has $k$ outgoing edges 
  labeled by  $1, \dots, k$.
  Recall that  $\Gamma$ is 
  called automatic if there exists a bijection between a regular 
  language and the set of vertices $V(\Gamma)$ such that for every 
  $j \in \Sigma_k$ the set of oriented edges labeled by $j$ is 
  accepted by a synchronous two--tape finite automaton. 
  From Lemma \ref{fromtranstoautomataprop1} we obtain that
  $\Gamma_T$ is automatic.     
  Suppose that $\Gamma$ is automatic. 
  Lemma \ref{fromautomatatotransprop1} below shows 
  that $\Gamma$ can be obtained as $\Gamma_T$ for some 
  $(k+1)$--tape Turing transducer $T \in \mathcal{T}$.
  \begin{lemma}
  	\label{fromautomatatotransprop1}   
  	There exists a $(k+1)$--tape Turing transducer 
  	$T \in \mathcal{T}$ for which $\Gamma_T \cong \Gamma$.    
  \end{lemma}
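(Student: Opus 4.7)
The plan is to construct $T$ so that on input $x$, during the left-to-right pass it nondeterministically guesses and writes the unique strings $y_1,\dots,y_k$ with $(x,y_j)\in R_j$ on the $k$ output tapes, while simultaneously simulating in its finite control the $k$ two-tape synchronous automata $\mathcal{M}_1,\dots,\mathcal{M}_k$ witnessing the automaticity of $\Gamma$; here $R_j\subseteq L\times L$ denotes the $j$-labeled edge relation, so $\mathcal{M}_j$ accepts $\otimes R_j$.

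The key preliminary is a one-sided length bound. Since every vertex of $\Gamma$ has exactly one outgoing $j$-edge, each $R_j$ is the graph of a function $f_j\colon L\to L$. A standard pumping argument applied to $\mathcal{M}_j$ yields a constant $C$ with $|f_j(x)|\leqslant|x|+C$ for all $x\in L$ and all $j\in\Sigma_k$: otherwise the convolution $\otimes(x,f_j(x))$ has an arbitrarily long tail of symbols of the form $(\diamond,y_i)$, inside which a state of $\mathcal{M}_j$ must repeat, letting one pump a nonempty factor and produce a different accepted convolution $\otimes(x,y')$ with $y'\neq f_j(x)$, contradicting the functionality of $R_j$. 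No symmetric bound is needed: if $|f_j(x)|<|x|$ the $j$-th output tape simply stays blank past position $|f_j(x)|$ while its head continues to move synchronously.

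With $C$ in hand I would define $T$ as follows. In the synchronous left-to-right phase, at step $i$ the input head reads $x_i$; on each output tape $T$ nondeterministically either writes a letter or leaves the cell blank; and in its finite control $T$ feeds $\mathcal{M}_j$ the corresponding convolution symbol, suspending the simulation of $\mathcal{M}_j$ once both sides of $\otimes(x,y_j)$ are exhausted (so that the forbidden symbol $(\diamond,\diamond)$ is never presented to it). After step $|x|$, $T$ uses its constant-bounded overshoot, at most $C$ more right moves, to finish writing any output $y_j$ with $|y_j|>|x|$. It then accepts only if every $\mathcal{M}_j$ has terminated in an accepting state, and the right-to-left phase is a trivial return of all heads to the leftmost cells. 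Because each $R_j$ is functional, the only accepting computation on $x\in L$ writes $y_j=f_j(x)$ on the $j$-th output tape, and $T$ accepts exactly the strings of $L$, so $\Gamma_T\cong\Gamma$.

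The main obstacle is the one-sided length bound $|f_j(x)|\leqslant|x|+C$; everything else is routine bookkeeping. Once that bound is established, it matches exactly the constant-overshoot clause in the definition of the class $\mathcal{T}$, which is precisely why $\mathcal{T}$ is strong enough to realise every automatic labeled directed graph as some $\Gamma_T$.
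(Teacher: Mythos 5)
Your proof identifies exactly the right technical crux --- the one-sided length bound $|f_j(x)|\leqslant |x|+C$ coming from functionality of each $R_j$ plus a pumping argument on the $(\diamond,\cdot)$-tail of the convolution --- and that argument is correct (it is in essence Lemma~2.3.9 of the Epstein et al.\ book). The paper itself proves the lemma by citing the construction of \cite[Theorem~2.3.10]{Epsteinbook}, which packages both the length bound and the machine construction, and then combines $k$ two--tape transducers $T_{R_1},\dots,T_{R_k}$ into one $(k+1)$--tape transducer; so your decomposition into the $k$ edge relations and your use of the length bound match the intended route.

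The genuine gap is your use of nondeterminism. You have $T$ \emph{guess} each $y_j$ on the forward pass and verify it with $\mathcal{M}_j$, leaving the right-to-left pass as a trivial rewind. But everywhere in the paper a transducer $T\in\mathcal{T}$ is treated as computing a function: ``let $w_1,\dots,w_k$ be the outputs of $T$ for $w$'' and all three numerical characteristics ($b_n$, $f_n$, $\ell_n$) presuppose a single well-defined output tuple per input, i.e.\ a deterministic machine. A guess-and-verify machine does have unique outputs across \emph{accepting} runs (by functionality of $R_j$), but it is not a deterministic transducer, and a one-pass nondeterministic writer cannot simply be determinized in place. The repair is precisely what the cited Theorem~2.3.10 construction does and what the backward pass in the definition of $\mathcal{T}$ exists for: on the left-to-right pass deterministically record, at each position, the sets of states of the $\mathcal{M}_j$ reachable on convolutions of the current input prefix with candidate output prefixes; then on the right-to-left pass resolve these choices backwards from an accepting state and write the actual output letters. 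Without using the backward pass in this way, your machine is not in the class $\mathcal{T}$ as the paper uses it.
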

  \begin{proof}
  	The lemma can be obtained from the following fact. 
  	Let $R = \{(x,y) | x,y \in L\}$ be a binary relation 
  	such that $\otimes R$ is recognized by a two--tape synchronous 
  	finite automaton, where $L$ is a regular language. 
  	Suppose that for every $x \in L$ there exists exactly one
  	$y \in L$ such that $(x,y) \in R$. Then there exists a 
  	two--tape Turing transducer $T_R \in \mathcal{T}$ 
  	for which  
  	$T_R$ translates $x$ into $y$ iff $(x,y) \in R$
  	and $T_R$ rejects $x$ iff $x \notin L$.  
  	The construction of the Turing transducer $T_R$ can be 
  	found, e.g., in \cite[Theorem 2.3.10]{Epsteinbook}.  
  	The resulting $(k+1)$--tape Turing transducer 
  	$T \in \mathcal{T}$ is obtained 
  	as the combination of $k$ two--tape Turing transducers
  	$T_{R_1}, \dots , T_{R_k}$, where $R_1,\dots, R_k$ are 
  	the binary relations defined by the directed
  	edges of $\Gamma$ labeled by
  	$1,\dots,k$, respectively. $\Box$   
  \end{proof} 
  Lemmas \ref{fromtranstoautomataprop1} and \ref{fromautomatatotransprop1}
  together imply the following theorem.   
  \begin{theorem}
  	\label{automatatotranstheorem} 	
  	The labeled directed graph $\Gamma$ is automatic iff 
  	there exists a Turing transducer $T \in \mathcal{T}$ for which
  	$\Gamma \cong \Gamma_T$. $\Box$
  \end{theorem}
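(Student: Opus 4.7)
The theorem is essentially the two-sided packaging of the two lemmas just proved, so my plan is very short: I would simply assemble both directions and make sure the notion of ``automatic labeled directed graph'' used at the start of Section~\ref{characterizationsection} matches what each lemma delivers.

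First I would handle the ($\Leftarrow$) direction. Assume there is a $(k+1)$-tape Turing transducer $T \in \mathcal{T}$ with $\Gamma \cong \Gamma_T$. By Lemma~\ref{fromtranstoautomataprop1} there is a $(k+1)$-tape synchronous finite automaton $\mathcal{M}$ whose accepted convolutions $x \otimes y_1 \otimes \cdots \otimes y_k$ record exactly the translations of $T$, and in particular $L$ is regular. To extract, for each label $j \in \Sigma_k$, a two-tape synchronous finite automaton recognising the edges labeled $j$, I would project $\mathcal{M}$ onto the first and $(j{+}1)$-th tapes; this is a routine closure property of the class of relations recognised by synchronous multi-tape automata (existential quantification over the other output tapes). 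The bijection between the regular language $L$ and $V(\Gamma_T) = V(\Gamma)$ is the identity on $L$, so $\Gamma$ is automatic by definition.

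Next I would handle the ($\Rightarrow$) direction. Assume $\Gamma$ is automatic, witnessed by a regular language $L \subseteq \Sigma^*$, a bijection $L \to V(\Gamma)$, and, for every $j \in \Sigma_k$, a two-tape synchronous finite automaton $\mathcal{A}_j$ recognising the convolution of the edge relation $R_j \subseteq L \times L$. Because each vertex has exactly one outgoing $j$-edge, each $R_j$ is the graph of a function $L \to L$. Lemma~\ref{fromautomatatotransprop1} then yields, for each $j$, a two-tape Turing transducer $T_{R_j} \in \mathcal{T}$ translating $x$ to the unique $y$ with $(x,y) \in R_j$. I would then combine $T_{R_1},\dots,T_{R_k}$ into a single $(k+1)$-tape transducer $T$ by running them in parallel on the shared input tape: the combined $T$ performs one left-to-right sweep (taking the maximum of the bounded right-overshoots of the components) followed by one right-to-left sweep, and accepts iff every component does. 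By construction $T \in \mathcal{T}$ and $\Gamma_T \cong \Gamma$.

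The only step that needs a small argument rather than citation is the parallel combination in the second direction: one has to check that running $k$ machines of class $\mathcal{T}$ in lockstep on a common input tape still fits the head-movement discipline defining $\mathcal{T}$, i.e.\ one synchronous left-to-right sweep, a uniformly bounded overshoot, and one synchronous right-to-left sweep. This is straightforward from the product construction on finite control states, but it is the one place where the proof is not purely a quotation of Lemmas~\ref{fromtranstoautomataprop1} and \ref{fromautomatatotransprop1}. $\Box$
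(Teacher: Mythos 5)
Your proposal is correct and follows essentially the same route as the paper, which simply cites Lemma~\ref{fromtranstoautomataprop1} for one direction (with the tape-projection step left implicit) and Lemma~\ref{fromautomatatotransprop1} for the other. The only cosmetic difference is that you re-expand the parallel combination of the $k$ two-tape transducers inside the theorem's proof, whereas the paper already performs that combination inside the proof of Lemma~\ref{fromautomatatotransprop1} itself.
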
    
  Let $\Gamma (G,S)$ be a Cayley graph for some set of generators $S = \{s_1,\dots,s_k\}$. 
	Let us fix an order of elements in $S$ as $s_1,\dots,s_k$.      
  We say that the Cayley graph $\Gamma (G,S)$ is presented 
  by  $T \in \mathcal{T}$ if, 
  after changing labels from $j$ to $s_j$ for 
  every $j \in \Sigma_k$ 
  in $\Gamma_T$,  $\Gamma_T \cong \Gamma (G,S)$.
  The isomorphism $\Gamma_T \cong \Gamma (G,S)$
  defines the bijection $\psi : L \rightarrow G$ 
  up to the choice of the word of $L$ corresponding to 
  the identity $e \in G$.
  By Theorem \ref{automatatotranstheorem} we obtain that 
  if $\Gamma(G,S)$ is presented by $T \in \mathcal{T}$, then 
  $G$ is a Cayley automatic group and $T$ provides an 
  automatic presentation for the Cayley graph $\Gamma(G,S)$.
  Moreover, for each automatic
  presentation of $\Gamma(G,S)$ there is a corresponding 
  Turing transducer $T \in \mathcal{T}$ for which $\Gamma(G,S)$ is 
  presented by $T$.   
  
  \begin{remark} 
  	 Let 
  	 $\widehat{\mathcal{T}}$
  	 be the class of 
  	 Turing transducers 
  	 for which the heads 
  	 move synchronously 
  	 and that halt in 
  	 linear time 
  	 for every input. 
  	 Clearly, 
  	 $\mathcal{T} \subseteq 
  	  \widehat{\mathcal{T}}$. 
  	 However, for each
  	 Turing transducer in 
  	 the class 
  	 $\widehat{\mathcal{T}}$ 
  	 there exists a 
  	 Turing transducer in the
  	 class $\mathcal{T}$ 
  	 which accepts  the same language of inputs and for every 
  	 accepted input produces 
  	 the same output. 
  	 The latter follows 
  	 from the fact that   	  
  	 every function computed 
  	 on a (position--faithful)
  	 one--tape Turing  
  	 machine in linear time 
  	 is automatic (i.e., can be recognized by 
  	 a two--tape synchronous 
  	 finite automaton)  
  	 \cite{Stephan_lmcs_13}.
  	 Therefore, 
  	 $\widehat{\mathcal{T}}$ 
  	 and $\mathcal{T}$ present exactly the same 
  	 class of labeled directed
  	 graphs. 
   \end{remark}	
 
 \section{Numerical Characteristics of Turing Transducers}
 \label{numcharacterTuringtransducers}
  
   We now introduce three numerical characteristics for Turing transducers
   of the class $\mathcal{T}$. 
   Let $T \in \mathcal{T}$ be a $(k+1)$--tape Turing transducer 
   translating a language $L$ into $L' \subseteq L^k$. 
   Given a  word $w \in L$,
   feed $w$ to $T$. Let 
   $w_1, \dots, w_k \in L$ be the outputs of $T$ for $w$. 
   We denote by $T(w)$ the set 
   $T(w) = \{w_1,\dots,w_k\}$.   
   Given a set $W \subseteq L$, we denote by 
   $T(W)$ the set $T(W) = \bigcup\limits_{w \in W} T (w)$.  
   Let us choose a word $w_0 \in L$. Put $W_0 = \{w_0\}$, 
   $W_1 = T(W_0)$ and, for $i > 1$,  put $W_{i+1} = T(W_i)$. 
   Let $V_n = \bigcup\limits_{i=0}^{n} W_i$, $n \geqslant 0$.   
   Put $b_n  = \# V_n$. %, n \geqslant 0$.       
   \begin{itemize}
   	\item{We call the sequence $b_n, n = 0, \dots, \infty$ the 
   		growth function of  the  pair $(T,w_0)$.}    
   \end{itemize}  
   
	    For a given finite set $W \subseteq L$ put 
   $$\partial W = \{w \in W | T(w) \not \subseteq W\}.$$
   In other words, $\partial W$ is the set of words $w \in W$ 
   %such that if $w$ is fed to $T$, then 
   for which at least one of the outputs of $T$ for $w$ is not in $W$.   
   Define the function  
   $\mathrm{F{\o}l}(\varepsilon) : (0,1) \rightarrow \mathbb{N}$ 
   as  
   $$\mathrm{F{\o}l} (\varepsilon) =  \min \{ \# W | 
   \#\partial W < \varepsilon \#W  \}.$$  
   It is assumed that the function $\mathrm{F{\o}l}(\varepsilon)$ 
   is defined on the  whole interval $(0,1)$, i.e.,  
   for every $\varepsilon \in (0,1)$ the set 
   $\{ W | \#\partial W < \varepsilon \#W  \}$
   is not empty.     
%   In particular, this implies that for every $\varepsilon \in (0,1)$ and 
%   $W\subseteq L$, if $\#W < F{\o}l (\varepsilon)$, then  
%   $\# \partial W \geqslant \varepsilon \# W$.    
   \begin{itemize}
   	\item{We call the sequence
   		$f_n = \mathrm{F{\o}l} (\frac{1}{n}), n = 1, \dots, \infty$  
   		the F{\o}lner function of $T$.}
   \end{itemize}

	 Let $M$ be a finite multiset consisting of some words of $L$. 
   We denote by $T(M)$ the multiset obtained as follows. 
   Initially, $T(M)$ is empty. Then, for every word $w$ in $M$ 
   %feed $w$ to $T$ and 
   add the outputs of $T$ for $w$ to $T(M)$. 
   If $w$ has the multiplicity $m$ in $M$, 
   then this procedure must be  repeated $m$ times.
   %  Alternatively, one can think of $M$ as a list. 
   %  We run $T$ on each entry of the list and add 
   %  the outputs to a new list.
   %  The result is the new list $T(M)$.   
   Let $M_0$ be the multiset consisting of the word $w_0$ with the 
   multiplicity one. Put $M_1 = T(M_0)$ and, for $i>1$, put 
   $M_{i+1} = T(M_i)$. The total number of elements 
   (multiplicities are taken into account) in the multiset $M_n$ is $k^n$.
   Put $\ell_n$ to be 
   \begin{equation}
   \ell_n  = \frac{\sum \limits_{w \in M_n } m_w |w|}{k^n},
   \end{equation}
   where $m_w$ is the multiplicity of a word $w$ in $M_n$ and 
   $|w|$ is the length of  $w$. 
   In other words, $\ell_n$ is the average length of the words in the multiset $M_n$.   
   \begin{itemize}
   	\item{We call the sequence $\ell_n, n =1,\dots,\infty$ the average length growth function of
   		the pair $(T,w_0)$.}
   \end{itemize}  
%   \begin{remark}
%   	For the pair $(T,w_0)$ the difference $b_n-b_{n-1}$ 
%   	can be obtained as the number of different elements of 
%   	$M_n$ that are not in  $\bigcup_{j<n} M_j$.      
%   \end{remark}
%   \begin{remark}	   
%   	The Turing transducers to be considered in this paper
%   	present Cayley graphs of groups.
%   	It is clear that for such 
%   	Turing transducers 
%   	the asymptotic behavior of $b_n$ and $\ell_n$  
%   	does not depend on 
%   	the choice of $w_0$.   
%   	So, in some places of the present paper $w_0$ will be omitted.    
%   \end{remark} 

  %  In this paper we  discuss the aforementioned numerical characteristics 
  %  for Turing transducers of the classes $\mathcal{T}$.   
  %  The rest of the paper is organized as follows. 
  %  Section \ref{wreathproducts} recalls basic notation and definitions for 
  %  wreath products of groups. Section \ref{sectionExamples} discusses %asymptotic behavior of the numerical characteristics  for Turing %transducers of the classes $\mathcal{T}$ 
  %  and  $\mathcal{T}_p$. The motivation and open questions are discussed %in Section \ref{discussionSection}.    

  \section{Wreath Products of Groups: Basic Notation}
  \label{wreathproducts}

   Most of the labeled directed graphs in  this paper are obtained as Cayley graphs of wreath products of groups.  
   For the sake of convenience we describe basic notation 
   for  restricted wreath products 
   $A \mathrel{\wr} B$ in the present section. 
   For more details on wreath products see, e.g., \cite{KargapolovMerzljakov}.
   For given two groups $A$ and $B$,
   we denote by $A^{(B)}$ the set of all functions $f: B \rightarrow A$
   having finite supports.
   Recall that a function $f: B \rightarrow A$ 
   has finite support if the set  $\{ x \in B | f (x) \neq e \}$
   is finite, where $e$
   is the identity of $A$.
   Given $f \in A^{(B)}$ and $b \in B$, we define $f^b \in A^{(B)}$ as follows. Put $f^b (x) = f(bx)$ for all $x \in B$.
   The group $A \mathrel{\wr} B$ is the set product $A^{(B)} \times B$ 
   with the group multiplication given by 
   $( f, b ) \cdot ( f', b' )  =  ( f f'^{\, b^{-1}},  b b' )$.
   
   We denote by  $\mathfrak{i}_A$ the embedding  
	 $\mathfrak{i}_A : A \rightarrow A \mathrel{\wr} B$
	 for which $\mathfrak{i}_A : a \mapsto (f_a,e)$,
	 where $e$ is the identity of the group $B$ and 
	 $f_a \in A^{(B)}$ is the function  
	 $f_a: B \rightarrow A$ such that $f_a (e) = a$ and 
	 $f_a (x)$ is the identity of the group $A$ for 
	 every $x \neq e$.  	 
	 We denote by $\mathfrak{i}_B$ the embedding 
	 $\mathfrak{i}_B : B \rightarrow A \mathrel{\wr} B$  
   for which $\mathfrak{i}_B :  b \mapsto ({\bf e}, b )$, where
   ${\bf e}$ is the identity of the group 
   $A^{(B)}$; in other words, ${\bf e}$ 
	 is the function which maps all elements of $B$ to
   the identity of the group $A$.
   For the sake of convenience we will identify $A$ and $B$    
	 with the  subgroups 
	 $\mathfrak{i}_A (A) \leqslant A \mathrel{\wr} B$
	 and     
	 $\mathfrak{i}_B (B) \leqslant A \mathrel{\wr} B$, respectively.
   Let $S_A=\{a_1,\dots,a_n\} \subseteq A$ and 
   $S_B=\{b_1,\dots,b_m\} \subseteq B$ be 
   some sets of generators of the groups $A$ and $B$, respectively. 
   Then the set 
	 $S = \mathfrak{i}_A (S_A) \cup \mathfrak{i}_B (S_B)$ 
	 is a set of generators of $A \wr B$.
   The Cayley graph $\Gamma (A \wr B,S)$ can be obtained as follows. 
   The vertices of $\Gamma(A \wr B,S)$ are the elements of 
   $A \wr B$, i.e., all pairs $(f,b)$  such that 
   $f \in A^{(B)}$ and $b \in B$. 
   The right multiplication of an element $(f,b)$ by $a_i, i = 1, \dots,n$ is $(f,b) a_i = (\widehat{f}, b)$, where $\widehat{f}(s) = f(s)$ 
   if $s \neq b$ and $\widehat{f} (b) = f(b) a_i$. 
   The right multiplication of an element $(f,b)$ by $b_j, j = 1, \dots, m$
   is $(f,b) b_ j = (f, b b_j)$.
  
  \section{Asymptotic Behavior of the Numerical Characteristics}
  \label{sectionExamples}
	
	In this section we discuss asymptotic behavior of the numerical characteristics of Turing transducers of the class $\mathcal{T}$.   
	
	 \subsection{Growth Functions and F{\o}lner Functions}
   \label{subsectionfolner}	

%	  Consider the group $\mathbb{Z}_2 \wr \mathbb{Z}^2$. It is shown 
%		\cite[Example~2]{Erschlerisoperimetric03} that 
%		$\mathrm{F{\o}l}_{\mathbb{Z}_2 \wr \mathbb{Z}^2} \sim e^{n^2}$. 
%		In particular, $\mathrm{F{\o}l}_{\mathbb{Z}_2 \wr \mathbb{Z}^2}(n)$
%		grows faster than $\mathrm{F{\o}l}_{G_1}(n)$ but 
%		slower that $\mathrm{F{\o}l}_
%		{G_2}(n)$. 
%		\begin{remark}
%		     It is not known whether or not there exists a Turing transducer 
%			 $T \in \mathcal{T}$ (or $T \in \mathcal{T}_p$) for which a 
%             Cayley graph of 
%			 $\mathbb{Z}_2 \wr \mathbb{Z}^2$ is presented by $T$.  
%		\end{remark}	
%	\subsection{Growth function}
%	\label{subsectiongrowthfunction}              
        We first consider the behavior of  growth functions 
				%$b_n, n = 0, %\dots, %\infty$ 
		for Turing transducers of the class $\mathcal{T}$. 
        
      %  Examples \ref{H5example} 
      %  and \ref{Savchukexample} below show that
      %  their behavior can be different from that of rational function.  
        
    Let $G$ be a  group with a finite set of generators 
		$Q \subseteq G$. Put $S=Q\cup Q^{-1}$. 
		We recall that the growth function of the pair $(G,Q)$ is the 
		sequence $b_n = \# B_n, n = 0, \dots, \infty$, 
		where $\# B_n$ is the number of elements 
		in the ball $B_n = \{g\in G | \ell_S (g) \leqslant n \}$.  
		Let $T \in \mathcal{T}$ be a Turing transducer translating 
		a language $L$ into  
	    $L' \subseteq L ^k$, where $k = \#S$.
	    Choose any word $w_0 \in L$.  
		The following claim is straightforward. 
		\begin{claim}
		   Suppose that the Cayley graph $\Gamma (G,S)$ 
		   is presented by  $T$.
		   Then the growth function $b_n$ of the pair 
		   $(T,w_0)$ coincides with the growth function of the pair $(G,Q)$. $\Box$ 
 		\end{claim}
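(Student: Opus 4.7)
The plan is to prove the claim by unwinding the definitions through the bijection $\psi : L \to G$ provided by the automatic presentation, and then performing a short induction. Since Theorem \ref{automatatotranstheorem} and the discussion following it give $\psi$ only up to the choice of the word representing the identity, I would begin by fixing $w_0 \in L$ to be the word with $\psi(w_0) = e$, where $e$ is the identity of $G$.

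The key observation is that under the isomorphism $\Gamma_T \cong \Gamma(G,S)$, each vertex $w \in L$ has its $k$ outgoing edges in $\Gamma_T$ identified with the $k$ outgoing edges at $\psi(w)$ in $\Gamma(G,S)$, which are labelled by the elements of $S = \{s_1,\dots,s_k\}$. Consequently, for every $w \in L$,
\[
\psi(T(w)) = \{\psi(w)\cdot s_1,\ \dots,\ \psi(w)\cdot s_k\}.
\]
In particular $\#T(w)=\#S=k$, so no collisions among the outputs occur.

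From here I would prove by induction on $n$ that $\psi(V_n) = B_n$. The base case $n=0$ is immediate: $V_0 = \{w_0\}$ maps to $\{e\} = B_0$. For the inductive step, $V_{n+1} = V_n \cup T(W_n)$; applying $\psi$ and using the identity above together with the inductive description of $\psi(W_n)$ as the set of elements of $G$ of $S$-length at most $n$ that are reachable in exactly $n$ steps from $e$, one obtains $\psi(V_{n+1}) = B_n \cup B_n\cdot S = B_{n+1}$. Since $\psi$ is a bijection, taking cardinalities gives $b_n = \#V_n = \#B_n$, which is exactly the growth function of $(G,Q)$.

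I do not expect a real obstacle here: the argument is essentially a translation of the combinatorial operation $T(\cdot)$ into right multiplication by $S$ via $\psi$, followed by the standard description of balls in a Cayley graph as iterated products of generators. The only minor point worth stating carefully is that the freedom in choosing $w_0$ is precisely the freedom needed to align $w_0$ with $e$, so that the recursion in $\Gamma_T$ tracks the ball-growth in $\Gamma(G,S)$ from the identity.
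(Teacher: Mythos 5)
Your argument is correct and is exactly the routine unwinding the paper has in mind: the paper offers no proof at all, simply declaring the claim straightforward, and your identification $\psi(T(w))=\{\psi(w)s_1,\dots,\psi(w)s_k\}$ followed by the induction $\psi(V_n)=B_n$ is the intended justification. Your handling of the choice of $w_0$ is also right, since the label-preserving isomorphism $\Gamma_T\cong\Gamma(G,S)$ can be composed with a left translation so that $\psi(w_0)=e$ (equivalently, balls in a Cayley graph around any vertex have the same cardinality), so the claim indeed holds for an arbitrary $w_0\in L$ as stated.
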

		 One of the important questions in the group theory is whether
		 or not for a given pair $(G,Q)$  the growth series is rational. 
		 A similar question naturally arises for a pair $(T,w_0)$. 
		 It is easy to show an example of a pair 
		 $(T,w_0), T \in \mathcal{T}$ for which the growth series is not rational. 
		 \begin{example}
		 \label{H5example}
		 Stoll proved that the growth series 
		 of the Heisenberg group $H_5$ with respect to the 
		 standard set of generators is not rational~\cite{Stoll95}. 
		 The Cayley graph of $H_5$ is automatic
		 \cite[Example~6.7]{KKM11}.	
		   Therefore, we obtain that there exists a pair  
			 $(T,w_0), T\in \mathcal{T}$ for which the growth 
			 series $\sum b_n z^n$ is not rational. $\Box$ 
		\end{example}	
		Moreover,  a Turing transducer of the class 
		$\mathcal{T}$ may have a function $b_n, n= 0, \dots, \infty$ 
		of intermediate growth.		
		\begin{example}
		\label{Savchukexample}
		Miasnikov and Savchuk constructed an example 
		of a $4$--regular  automatic graph which has intermediate growth~\cite{MiasnikovSavchuk13}.  		
	     Therefore, we obtain that there exists a pair  
	   	 $(T,w_0),T \in \mathcal{T}$ 
		 for which the function $b_n,n=0,\dots,\infty$ has intermediate growth.$\Box$			 
	   \end{example}

	  We now consider the behavior of F{\o}lner function 
		$f_n, n = 1, \dots, \infty$ 
		for Turing transducers of the class $\mathcal{T}$. 
		F{\o}lner functions were considered by 
		A.~Vershik for Cayley graphs of amenable groups~\cite{VershikMir73}. 
	  We recall first some necessary definitions 
 	  regarding F{\o}lner functions  \cite{Erschlerisoperimetric03}. 
	  
		Let $G$ be an  amenable group  with a finite set 
	  of generators $Q \subseteq G$. 
	  Put $S = Q \cup Q^{-1}$. 
	  %Consider the Cayley graph $\Gamma (G,S)$.
	  Let $E$ be the set of directed edges of $\Gamma(G,S)$. 
	  For a given finite set $U \subseteq G$ 
 	  the boundary $\partial U$ is defined as 
	  \begin{equation*}
	     \partial U = \{ u \in U | \exists v \in G [(u,v) \in E \wedge 
		 v \notin U] \}. 
  	  \end{equation*}
	  The function 
		$\mathrm{F{\o}l}_{G,Q}:(0,1) \rightarrow \mathbb{N}$ is defined as 
	  $$\mathrm{F{\o}l}_{G,Q} (\varepsilon) = 
		 \min \{\# U | \# \partial U < \varepsilon \# U\}.$$ 
		The F{\o}lner function 
		$\mathrm{F{\o}l}_{G,Q}: \mathbb{N} \rightarrow \mathbb{N}$ is defined 
		as $\mathrm{F{\o}l}_{G,Q}(n) = \mathrm{F{\o}l}_{G,Q} (\frac{1}{n})$. 	
	  The following claim is straightforward. 
	  \begin{claim}
	   Suppose that the Cayley graph $\Gamma(G,S)$ is presented by a Turing 
		 transducer $T \in \mathcal{T}$. Then for the F{\o}lner function $f_n$ of $T$, 
		 $f_n = \mathrm{F{\o}l}_{G,Q}(n)$. $\Box$
	  \end{claim}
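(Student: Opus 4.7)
The plan is to verify that, via the bijection $\psi : L \to G$ supplied by the presentation (as in Section \ref{characterizationsection}), the notion of boundary used in the definition of the F{\o}lner function of $T$ matches exactly the notion of boundary used in the definition of $\mathrm{F{\o}l}_{G,Q}$. Since the two minimizations defining $f_n$ and $\mathrm{F{\o}l}_{G,Q}(n)$ then range over corresponding families of subsets of the same cardinalities, the values must coincide.

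Concretely, first I would recall that because $\Gamma(G,S)$ is presented by $T$, the bijection $\psi : L \to G$ (determined up to the choice of image of the basepoint) sends the $j$th outgoing edge of $w\in L$ in $\Gamma_T$ to the edge of $\Gamma(G,S)$ from $\psi(w)$ labelled by $s_j$. Thus, for every $w \in L$,
\begin{equation*}
\psi(T(w)) \;=\; \psi(w)\cdot S \;=\; \{\psi(w)s_1,\dots,\psi(w)s_k\}.
\end{equation*}

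Next, given any finite $W \subseteq L$, set $U = \psi(W) \subseteq G$. By the displayed identity, $T(w) \not\subseteq W$ if and only if some element of $\psi(w)S$ lies outside $U$, which, since $S = Q \cup Q^{-1}$ generates the set of outgoing edges of $\Gamma(G,S)$ at $\psi(w)$, is precisely the condition that $\psi(w) \in \partial U$ in the sense of the Cayley graph. Hence $\psi(\partial W) = \partial U$, and in particular $\#\partial W = \#\partial U$ and $\#W = \#U$. Conversely, every finite $U \subseteq G$ arises as $\psi(W)$ for $W = \psi^{-1}(U)$. Therefore the two families
\begin{equation*}
\{\,W \subseteq L : \#\partial W < \varepsilon \#W\,\}
\quad\text{and}\quad
\{\,U \subseteq G : \#\partial U < \varepsilon \#U\,\}
\end{equation*}
are in cardinality-preserving bijection via $\psi$, so their minima agree: $F{\o}l(\varepsilon) = F{\o}l_{G,Q}(\varepsilon)$ for every $\varepsilon \in (0,1)$. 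Specializing to $\varepsilon = 1/n$ gives $f_n = \mathrm{F{\o}l}_{G,Q}(n)$.

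There is essentially no obstacle here; the only point that requires a brief verification is the identification $\psi(T(w)) = \psi(w)S$, which is immediate from what it means for $T$ to present $\Gamma(G,S)$. Everything else is bookkeeping about the bijection $\psi$ transporting the combinatorial notion of boundary from $\Gamma_T$ to $\Gamma(G,S)$.
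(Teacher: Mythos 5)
Your proof is correct and is precisely the straightforward verification the paper has in mind (the paper states the claim without proof, calling it straightforward): the bijection $\psi$ identifies $T(w)$ with the set of $S$-neighbours of $\psi(w)$, hence carries $\partial W$ to $\partial U$ bijectively, so the two minimizations coincide.
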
  	
	  %The isoperimetric profile  
  	%$I_{G,S}(n) : \mathbb{N} \rightarrow \mathbb{N}$ 
  	%of $\Gamma(G,S)$ is defined as 
  	%\begin{equation}
	  % I_{G,S}(n)= 
		% \min \{\# \partial U | U \subseteq G \wedge \# U = n\}.
	  %\end{equation} 
    In this subsection we say that $f_1 (n) \sim f_2(n)$ if
	  there exists  $K \in \mathbb{N}$ such that 
	  $f_1 (Kn) \geqslant \frac{1}{K} f_2(n)$ and 
	  $f_2 (Kn) \geqslant \frac{1}{K} f_1(n)$, i.e., 
		$f_1(n)$ and $f_2(n)$ are equivalent up to a quasi--isometry. 
	  Let $Q' \subseteq G$ be another set generating $G$.
		Then $\mathrm{F{\o}l}_{G,Q} (n) \sim \mathrm{F{\o}l}_{G,Q'}(n)$. 
		In this subsection F{\o}lner 
		functions are considered up to quasi--isometries. 
		So, instead of $\mathrm{F{\o}l}_{G,Q} (n)$, 
		we will write $\mathrm{F{\o}l_G} (n)$.
	
%	\begin{remark}
%		The isoperimetric profile 
%		$I_{G,Q}: \mathbb{N} \rightarrow \mathbb{N}$ 
%		of the pair $(G,Q)$ is the function 
%		$I_{G,Q} (m)=\min 
%		\{\# \partial U | U \subseteq G \wedge \# U = m\}$.
%		It can be seen that $I_{G,Q} (m) \sim 
%		\frac{m}{\mathrm{F{\o}l}_{G,Q}^{-1}(m)}$. $\Box$			
%	\end{remark}		
		Let $G_1 = \mathbb{Z} \wr \mathbb{Z}$. Put  
		$G_{i+1} = G_{i} \wr \mathbb{Z}, i \geqslant 1$. 	
		It is shown \cite[Example~3]{Erschlerisoperimetric03} that 
		$\mathrm{F{\o}l}_{G_i} (n) \sim n^{(n^i)}$. 
		It follows from \cite[Theorem~3]{berdinskykhoussainov15}
		that for every integer $i \geqslant 1$ there exists a Turing 
		transducer $T_i \in \mathcal{T}$ for which a Cayley graph 
		of $G_i$ is presented by $T_i$. 
		The following theorem shows that the logarithm of 
		F{\o}lner functions for Turing transducers of the class $\mathcal{T}$
		can grow faster than any given polynomial.   
	%	Therefore, the following theorem holds. 
		\begin{theorem}
		\label{theorem_fn}   
		   For every integer $i \geqslant 1$
			 there exists a Turing transducer of the class 
		   $\mathcal{T}$ for which $f_n \sim n^{(n^i)}$.  $\Box$
		\end{theorem}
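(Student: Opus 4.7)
The proof is essentially a three-line assembly of results that have already been cited and stated in the preceding material, so my plan is to make this assembly explicit and check that nothing is lost in the chain.

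First, I would invoke \cite[Theorem~3]{berdinskykhoussainov15} to fix, for the given $i \geqslant 1$, a Turing transducer $T_i \in \mathcal{T}$ which presents a Cayley graph $\Gamma(G_i, S_i)$, where $S_i = Q_i \cup Q_i^{-1}$ for some finite generating set $Q_i$ of $G_i = \mathbb{Z} \wr_i \mathbb{Z}$ (iterated wreath product, $i$ factors after the first $\mathbb{Z}$). Since $T_i \in \mathcal{T}$ presents $\Gamma(G_i, S_i)$, the second Claim of this subsection applies and yields $f_n = \mathrm{F{\o}l}_{G_i, Q_i}(n)$ for the F{\o}lner function $f_n$ of $T_i$.

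Second, since F{\o}lner functions of a group are equivalent under the relation $\sim$ regardless of the choice of finite generating set, one has $\mathrm{F{\o}l}_{G_i, Q_i}(n) \sim \mathrm{F{\o}l}_{G_i}(n)$. I then appeal to Erschler's estimate \cite[Example~3]{Erschlerisoperimetric03}, namely $\mathrm{F{\o}l}_{G_i}(n) \sim n^{(n^i)}$, to conclude $f_n \sim n^{(n^i)}$.

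There is no genuine obstacle: every nontrivial ingredient (the existence of the Cayley automatic presentation of $G_i$, the equivalence between the F{\o}lner function of the transducer and that of the group under the presentation, and the asymptotic $n^{(n^i)}$ estimate for $G_i$) is an external input quoted from the literature or from the preceding claim. The only thing to be careful about is bookkeeping: one must make sure that the generating set used in \cite{berdinskykhoussainov15} to present $\Gamma(G_i, S_i)$ and the generating set implicit in the transducer's $k$ output tapes agree (with $k = \#S_i$), and that the quasi-isometry relation $\sim$ is the same one under which Erschler's estimate is stated. Both are routine checks. Therefore the theorem follows by concatenating the two claims and Erschler's estimate.
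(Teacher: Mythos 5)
Your proposal is correct and is exactly the argument the paper intends: the theorem is stated with an immediate $\Box$ precisely because it follows by concatenating the cited existence of a transducer $T_i$ presenting a Cayley graph of $G_i$ (from \cite[Theorem~3]{berdinskykhoussainov15}), the preceding Claim identifying $f_n$ with $\mathrm{F{\o}l}_{G_i,Q_i}(n)$, and Erschler's estimate $\mathrm{F{\o}l}_{G_i}(n)\sim n^{(n^i)}$ together with the generating-set invariance of F{\o}lner functions up to $\sim$. Your bookkeeping remarks about matching generating sets and the equivalence relation are appropriate and consistent with the paper's conventions.
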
	
		\begin{remark}
		Consider the group $\mathbb{Z} \wr (\mathbb{Z} \wr \mathbb{Z})$. 
		It is shown \cite[Example~4]{Erschlerisoperimetric03} that 
		$\mathrm{F{\o}l}_{\mathbb{Z} \wr (\mathbb{Z} \wr \mathbb{Z})} (n)
		 \sim n^{(n^n)}$. 
	    In particular, $\mathrm{F{\o}l}_{\mathbb{Z} 
		     \wr (\mathbb{Z} \wr \mathbb{Z})} (n) $ grows faster than 
		    $\mathrm{F{\o}l}_{G_i} (n)$ for every $i \geqslant 1$. 
		     However, it is not known whether or not there exists a Turing transducer
			 $T \in \mathcal{T}$ for which a Cayley graph of $\mathbb{Z} \wr (\mathbb{Z} \wr \mathbb{Z})$ is 
			 presented by $T$. $\Box$ 
		\end{remark}
	
  \subsection{Random Walk and Average Length Growth Functions}    
  \label{subsectionrandomwalk}  
  We recall first some necessary definitions \cite{Vershik99}.		
  Let $G$ be an infinite  group with 
  a set of generators $Q = \{ s_1, \dots, s_m \} \subseteq G$.
  % which in general is not supposed to be minimal  
  Put $S = Q \cup Q^{-1} = 
       \{ s_1, \dots, s_m,$ $s_1 ^{-1}, \dots, s_m ^{-1} \}$.  
  For a given $g \in G$ we denote by $\ell_S (g)$ the minimal length of
  a word representing $g$ in terms of $S$.    
  We denote by $B_n$ the ball of the radius $n$, 
  $B_n = \{ g \in G | \ell_S (g) \leqslant n \}$.
  Let $\mu$ be a symmetric measure defined on $S$, i.e., 
  $\mu (s) = \mu (s^{-1})$ for all $s \in S$.
  The convolution 
  $\mu ^{*n} (g)$ on $B_n$ is defined as
  \begin{equation*}
  \mu ^{*n} (g) = \sum_{g = g_1 \dots g_n} \prod_{i = 1, \dots, n} \mu (g_i), 
  \end{equation*}
  where $g_i \in S$, $i=1,\dots,n$. 
  
  Let $c_n (g)$ be the number of words of length  
  $n$ over the alphabet $S$  representing the element $g \in G$.
  If $\mu$ is the uniform measure on $S$, then $\mu ^{*n} (g) = \frac{c_n(g)}{(2m)^n}$. Therefore, $\mu ^{*n} (g)$ is the probability
  that a $n$--step simple symmetric random walk on the Cayley 
  graph $\Gamma(G,S)$, which starts at the identity $e \in G$, 
  ends up at the vertex $g \in G$. 
  In this paper we consider only uniform measures $\mu$.   
  We denote by $E_{\mu^{*n}}[\ell_S]$  the average value of the 
  functional $\ell_S$ on the ball $B_{n}$ with respect to the measure 
  $\mu^{*n}$.  	 
  For some Cayley graphs of wreath products of groups
  we will show  asymptotic behavior of $E_{\mu^{*n}}[\ell_S]$ of the form 
  $E_{\mu^{*n}}[\ell_S] \asymp \ f(n)$, 
  where $g(n) \asymp f(n)$ means that  
  $\delta_1 f(n) \leqslant g(n) \leqslant \delta_2 f(n)$ for some 
  constants $\delta_2 \geqslant \delta_1 > 0$.

  Let $T \in \mathcal{T}$ be a Turing transducer translating a language $L$ into  $L'$.
  Suppose that the Cayley graph $\Gamma (G,S)$ is presented by  
  $T$. Let us choose any word $w_0 \in L$. 
  The Turing transducer $T$ provides the bijection 
  $\psi: L \rightarrow G$ such that $\psi^{-1}(e) = w_0$. 
  Therefore, we can consider 
  the average of the functional $|w|$ on the ball $B_n$ 
  with respect to the measure $\mu^{*n}$, where $|w|$ is the 
  length of a word $w \in L$. 
  The following claim is straightforward.  
  \begin{claim}
     For a $n$--step symmetric simple random walk on the Cayley graph 
     $\Gamma (G,S)$, $E_{\mu^{*n}}[|w|] = \ell_n$, where 
     $\ell_n$ is the $n$th element of the average length growth 
     function of the pair $(T,w_0)$. $\Box$   
  \end{claim}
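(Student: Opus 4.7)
The plan is to unwind the definitions and exploit the bijection $\psi:L\to G$ provided by the presentation of $\Gamma(G,S)$ by $T$. The key observation is that because the labels on $\Gamma_T$ correspond (after renaming $j\mapsto s_j$) to the edge labels of $\Gamma(G,S)$, applying $T$ to a word $w\in L$ produces the multiset $T(w)=\{w_1,\dots,w_k\}$ with $\psi(w_j)=\psi(w)\,s_j$ for $j=1,\dots,k$. In other words, one step of the operator $T$ on a multiset supported on $L$ corresponds exactly, via $\psi$, to one step of the simple random walk driven by the uniform measure $\mu$ on $S$, except that one keeps track of counts rather than probabilities.

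First I would formalize this correspondence in a lemma-style remark: for any $w\in L$ and any $j\in\Sigma_k$, the $j$th output of $T$ on $w$ is $\psi^{-1}(\psi(w)s_j)$. This follows directly from the definition of ``$\Gamma(G,S)$ is presented by $T$'' given in Section~\ref{characterizationsection}.

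Next I would prove by induction on $n$ that, for every $w\in L$, the multiplicity $m_w$ of $w$ in $M_n$ equals $c_n(\psi(w))$, the number of words of length $n$ over $S$ representing $\psi(w)$ (equivalently, the number of length-$n$ paths from $e$ to $\psi(w)$ in $\Gamma(G,S)$). The base case $n=0$ holds because $M_0=\{w_0\}$ and $\psi(w_0)=e$, so $c_0(e)=1$ and $c_0(g)=0$ for $g\ne e$. For the inductive step, the construction of $M_{n+1}=T(M_n)$ adds, for each occurrence of a word $w'\in M_n$, the $k$ outputs of $T$ on $w'$, which by the correspondence are precisely the preimages of $\psi(w')s_1,\dots,\psi(w')s_k$. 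Summing over predecessors reproduces the recursion $c_{n+1}(g)=\sum_{s\in S}c_n(gs^{-1})$ (valid because $S$ is symmetric), giving $m_w=c_n(\psi(w))$.

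Finally, since $\mu$ is uniform on $S$ with $|S|=k$, we have $\mu^{*n}(g)=c_n(g)/k^n$, and hence
\begin{equation*}
\ell_n \;=\; \frac{1}{k^n}\sum_{w\in M_n} m_w\,|w| \;=\; \frac{1}{k^n}\sum_{g\in B_n} c_n(g)\,|\psi^{-1}(g)| \;=\; \sum_{g\in B_n}\mu^{*n}(g)\,|\psi^{-1}(g)| \;=\; E_{\mu^{*n}}[|w|],
\end{equation*}
which is the desired identity. The only step requiring any real care is the inductive bookkeeping of multiplicities, and even that is routine once one notes that the multiset $T(M_n)$ is defined so as to preserve total count $k^{n+1}$ and to distribute mass according to the generators, matching the convolution $\mu * \mu^{*n}$ exactly. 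There is no genuine obstacle here; the claim is essentially a translation between the dynamics on $L$ induced by $T$ and the random walk on $G$.
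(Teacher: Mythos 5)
Your proof is correct; the paper states this claim without proof (it is simply declared ``straightforward''), and your argument --- identifying the multiplicity of $w$ in $M_n$ with the path count $c_n(\psi(w))$ by induction on $n$ and then normalizing by $k^n$ to recover $\mu^{*n}$ --- is exactly the intended bookkeeping. The details you supply (the correspondence $\psi(w_j)=\psi(w)s_j$ from the definition of ``presented by $T$,'' and the convolution recursion $c_{n+1}(g)=\sum_{s\in S}c_n(gs^{-1})$) are the right ones, so there is nothing to correct.
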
 
	The following proposition relates $\ell_n$ and $E_{\mu^{*n}} [\ell_S]$.
	\begin{proposition}	
		There exist constants $C_1$ and $C_2$ such that
		$\ell_n \leqslant C_1 E_{\mu^{*n}}[\ell_S] + C_2$ for all $n$.	
	\end{proposition}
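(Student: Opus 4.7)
The plan is to exploit the very restricted head motion of transducers in $\mathcal{T}$ to obtain a pointwise (not just expected) linear bound $|\psi^{-1}(g)| \le C_1\,\ell_S(g) + C_2$, and then take expectations with respect to $\mu^{*n}$.

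First I would extract a \emph{length-Lipschitz} property of the transducer $T$. By the definition of the class $\mathcal{T}$, when $T$ reads an input $w$ the heads sweep right to the end of $w$, make at most $K$ further steps to the right (for some constant $K = K(T)$), and then sweep left. Consequently every output word $w'$ produced by $T$ on $w$ satisfies $|w'| \le |w| + K$. Since the generating set $S = Q \cup Q^{-1}$ is symmetric and $T$ has one output tape per element of $S$, the same bound holds in the opposite direction: if $w,w' \in L$ correspond under $\psi$ to neighbours in $\Gamma(G,S)$, then $\bigl||w| - |w'|\bigr| \le K$.

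Next I would propagate this inequality along a geodesic. Fix $g \in G$ and let $e = g_0, g_1, \dots, g_d = g$ be a geodesic in $\Gamma(G,S)$, where $d = \ell_S(g)$. Writing $w_i = \psi^{-1}(g_i)$, the edge-by-edge bound gives $|w_{i+1}| \le |w_i| + K$, hence
\begin{equation*}
  |\psi^{-1}(g)| \;\le\; |w_0| + K\,\ell_S(g).
\end{equation*}
This is the desired pointwise bound with $C_1 = K$ and $C_2 = |w_0|$.

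Finally, combining this with the claim immediately preceding the proposition (which identifies $\ell_n$ with $E_{\mu^{*n}}[|w|]$), I average over $g$ weighted by $\mu^{*n}(g)$:
\begin{equation*}
  \ell_n \;=\; \sum_{g \in B_n} \mu^{*n}(g)\,|\psi^{-1}(g)| \;\le\; |w_0| + K \sum_{g \in B_n} \mu^{*n}(g)\,\ell_S(g) \;=\; C_1\,E_{\mu^{*n}}[\ell_S] + C_2.
\end{equation*}
There is no real obstacle here; the only point requiring a small argument is the extraction of the constant $K$ from the definition of $\mathcal{T}$, and in particular the observation that the bound $|w'| \le |w| + K$ holds in both directions thanks to the symmetry $S = Q \cup Q^{-1}$. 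Once this uniform Lipschitz property of $\psi^{-1}$ is in hand, the passage to expectations is immediate by linearity.
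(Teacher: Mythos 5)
Your proof is correct and follows essentially the same route as the paper: both extract the constant $c$ (your $K$) from the bounded overshoot in the definition of $\mathcal{T}$, giving $|y_j|\leqslant |x|+c$, set $C_1=c$ and $C_2=|w_0|$, and conclude by propagating along a geodesic and averaging. The paper leaves the geodesic propagation and the passage to expectations implicit; you spell them out (the remark about the reverse inequality via symmetry of $S$ is not actually needed, since a geodesic from $e$ to $g$ already traverses edges in the forward direction of the transducer).
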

	\begin{proof}
	   Recall that, by definition, there exists a constant $c$ such that for every input $x \in L$ and an output $y_j \in L, j = 1, \dots, 2m$, 
	   $|y_j| \leqslant |x| + c$. Put $C_1 = c$ and $C_2 = |w_0|$. 
	   Therefore, we obtain that the inequality  
	   $\ell_n \leqslant C_1 E_{\mu^{*n}}[\ell_S] + C_2$ holds
	   for all $n$. $\Box$
	\end{proof}	    
  It is easy to give examples of Turing transducers of the class 
  $\mathcal{T}$ for which
  $\ell_n \asymp \sqrt{n}$ and the growth function 
  $b_n$ is polynomial using  a unary--like representation 
  of integers. See Example \ref{example_ln_Zn} below.     
  \begin{example}
  \label{example_ln_Zn}
   Let $Q = \{s_1,\dots,s_m\}$ be the standard set of generators of the group $\mathbb{Z}^m$, where 
  $s_i = (\delta_{i}^1, \dots, \delta_{i} ^m)$ and 
  $\delta_{i}^j = 1$ if $i = j$, $\delta_{i} ^j =0$ if $i \neq j$. 
  Put $S = Q \cup Q^{-1}$. 
  It can be seen  that  
  there  exists a $(2m+1)$--tape Turing transducer $T \in \mathcal{T}$ 
	translating a language $L$ into a language $L' \subseteq L^{2m}$ for which 
  $\Gamma (\mathbb{Z}^m,S)$ is presented by $T$. 
  It is easy to see that a language $L$
  and an isomorphism between $\Gamma_T$ 
  and $\Gamma (\mathbb{Z}^m,S)$ can be chosen 
  in a way that $\ell_S (g)= |w|$, where $g \in \mathbb{Z}^m$ 
  and $w\in L$ is the word corresponding to $g$. In particular, 
  put the empty word $\epsilon$ to be the representative of the 
  identity $(0,\dots,0) \in \mathbb{Z}^m$.  
  Therefore, for such a Turing transducer 
  $T$, $\ell_n = E_{\mu^{*n}}[\ell_S]$. For a symmetric simple random walk on the $m$--dimensional grid, $E_{\mu^{*n}}[\ell_S] \asymp \sqrt{n}$. 
  For the proof see, e.g., \cite{Spitzerbook}.    
  So, for the pair $(T,\epsilon)$, $\ell_n \asymp \sqrt{n}$. The growth 
  function $b_n$ of $(T,\epsilon)$ is  polynomial. 
  Thus, we obtain   $(2m+1)$--tape Turing transducers $T_m, m = 1,\dots,\infty$ 
  for which $\ell_n \asymp \sqrt{n}$ and the growth function 
  $b_n$ is polynomial. $\Box$ 
  \end{example} 
   
    Is there 
    a Turing transducer of the class $\mathcal{T}$ for which $\ell_n \asymp \sqrt{n}$ and the growth function $b_n$ is exponential?    
    We will construct such a Turing transducer in Lemma \ref{lamplighterexampleln}.

    Let $H$ be a group with a set of generators $S_H =\{t_1,\dots,t_k\}$. 
	Consider the group $\mathbb{Z}_2 \wr H$. Let 
	$h \in \mathbb{Z}_2 ^{(H)}$ be the function 
	$h: H \rightarrow \mathbb{Z}_2$ such that 
	$h(g)=e$ if $g \neq e$ and $h(e)= a$, 
	where $a$ is the nontrivial element of $\mathbb{Z}_2$.     
    Let $Q = \{ t, t h, ht, hth | t \in S_H\}$ 
    be the set of generators of the  
	group $\mathbb{Z}_2 \wr H$. Put $S = Q \cup Q^{-1}$. 
    Consider a symmetric simple random walk on 
	the Cayley graph $\Gamma(\mathbb{Z}_2 \wr H, S)$. 
	It is easy to see that a $n$--step random 
	walk on $\Gamma(\mathbb{Z}_2 \wr H, S)$ corresponds to 
	a $n$--step random walk on $H$. 
	Put $P = S_H \cup S_H^{-1}$.
    Let $R_n $ be the number of different vertices visited 
	after walking $n$ steps on $\Gamma(H,P)$. We call  
	$R_n$ the range of a $n$--step random walk on $\Gamma(H,P)$.
	In the following proposition the   
    asymptotic behavior of $E_{\mu^{*n}}[\ell_S]$ is expressed in terms of $E_{\mu^{*n}}[R_n]$ -- the average range for a $n$--step  random walk on $\Gamma(H,P)$.
	\begin{proposition}
    \label{lemma2dyubina99}   
		 Let $H$ and $S$ be as above. For a symmetric simple random walk on 
		 $\Gamma(\mathbb{Z}_2 \wr H, S)$, 
		 $E_{\mu^{*n}}[\ell_S] \asymp E_{\mu^{*n}}[R_n]$.
    \end{proposition}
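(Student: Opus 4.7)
The plan is to sandwich $\ell_S(X_n)$ between two geometric quantities associated with the trace of the projected walk on $\Gamma(H,P)$, and then to compute their expectations. Let $X_n = (f_n, g_n)$ denote the state of the walk on $\mathbb{Z}_2 \wr H$ after $n$ steps, and let $V_n \subseteq H$ be the set of vertices visited by the $H$-projection, so that $R_n = \#V_n$. By construction $V_n$ is connected in $\Gamma(H,P)$ and contains $\{e, g_n\} \cup \mathrm{supp}(f_n)$.

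The central deterministic estimate I would establish is
\begin{equation*}
 \#\mathrm{supp}(f_n) \;\leq\; \ell_S(X_n) \;\leq\; C \cdot R_n,
\end{equation*}
where $C$ depends only on $P$. The lower bound is immediate: any word over $S$ representing $(f_n, g_n)$ projects to a walk on $\Gamma(H,P)$ of the same length, which must visit every vertex of $\mathrm{supp}(f_n)$ because lamps are toggled only at the walker's current position. For the upper bound, pick a spanning tree of $V_n$ in $\Gamma(H,P)$ and double its edges to obtain a closed Eulerian tour based at $e$ of length at most $2(R_n-1)$; then append a geodesic from $e$ to $g_n$ inside $V_n$, of length at most $R_n$. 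Because each generator in $S$ performs one move in $\Gamma(H,P)$ together with an arbitrary choice of pre-move and post-move lamp flips, this $H$-walk lifts to a word over $S$ of the same length producing any prescribed lamp configuration supported on $V_n$, in particular $f_n$.

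Taking expectations immediately gives $E_{\mu^{*n}}[\ell_S] \leq C \cdot E_{\mu^{*n}}[R_n]$. For the matching lower bound I would show $E_{\mu^{*n}}[\#\mathrm{supp}(f_n)] = \tfrac{1}{2} E_{\mu^{*n}}[R_n]$. Under the uniform measure on $S$, each step decomposes as a triple (pre-flip, move, post-flip) with the three components independent and the two flips Bernoulli$(1/2)$. Fix $v \in H$: on the event $\{v \in V_n\}$, the lamp at $v$ is toggled a positive number of times by independent fair coins, so it is on at time $n$ with probability exactly $1/2$. Summing over $v$ gives the identity, and combined with the sandwich completes the proof.

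The main obstacle is the geometric upper bound $\ell_S(X_n) \leq C\cdot R_n$: one must verify that the four decorated generators $\{t, th, ht, hth\}$ genuinely permit any lamp pattern supported on a connected subset of $H$ to be realised by a word whose length is proportional to the size of that subset. This is precisely why the generating set is chosen in this form rather than $Q = S_H \cup \{h\}$, for which the lamp flips would contribute a separate additive term and the clean $\asymp$ comparison with $R_n$ would have to be rephrased.
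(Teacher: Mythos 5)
Your argument is correct, but it is worth pointing out that the paper does not actually prove this proposition: its ``proof'' is the single line ``see \cite[Lemma~2]{Dyubina99}'', so you have supplied the argument the paper outsources, and what you wrote is essentially the standard proof behind Dyubina's lemma. The two halves are the right ones, and both hinge on the deliberately ``decorated'' generating set $\{h^p t h^q\}$: each step of the walk is an $H$-move together with independent fair pre- and post-flips, which gives the deterministic sandwich (support of the lamp configuration below, three times the range above, via an Euler tour of a spanning tree of $V_n$ followed by a tree path to $g_n$) and the exact identity $E_{\mu^{*n}}[\#\mathrm{supp}(f_n)]=\tfrac12 E_{\mu^{*n}}[R_n]$, since conditionally on the $H$-trajectory the lamp at each visited site is the sum mod $2$ of at least one fair coin. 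Three small points to tidy up, none of which affects the conclusion. First, the lower bound should read $\#\mathrm{supp}(f_n)\leq \ell_S(X_n)+1$, not $\leq \ell_S(X_n)$: a word of length $L$ projects to a walk visiting up to $L+1$ sites (e.g.\ $hth$ has length $1$ and support of size $2$). Second, the resulting estimate $E_{\mu^{*n}}[\ell_S]\geq \tfrac12 E_{\mu^{*n}}[R_n]-1$ only becomes $E_{\mu^{*n}}[\ell_S]\succcurlyeq E_{\mu^{*n}}[R_n]$ after the additive constant is absorbed; this is routine but should be said, using that every generator in $S$ moves the walker in $H$ (so $R_n\geq 2$ almost surely for $n\geq 1$) and that $E_{\mu^{*n}}[R_n]\to\infty$. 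Third, the factorization of the uniform measure on $S$ into an independent triple (flip, move, flip) with the move uniform on $P$ presumes that $S$ is in bijection with $\{0,1\}\times P\times\{0,1\}$, i.e.\ that there are no coincidences among the words $h^p\tau h^q$; this matches the paper's counting convention $\mu^{*n}(g)=c_n(g)/(2m)^n$ but deserves one sentence.
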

    \begin{proof}
	   For the proof see \cite[Lemma~2]{Dyubina99}. $\Box$
	\end{proof}
	
	\begin{lemma}	
	\label{lamplighterexampleln}
	   There exists a set of generators $S_1$ of the lamplighter  
	   group $\mathbb{Z}_2 \wr \mathbb{Z}$ 
	   for which the following statements hold. 
	   \begin{enumerate}[(a)]
	   	  \item For a simple symmetric random walk on 
	   	  	     $\Gamma(\mathbb{Z}_2 \wr \mathbb{Z},S_1)$, 
	   	  	     $E_{\mu^{*n}}[\ell_{S_1}] \asymp \sqrt{n}$.
	   	  \item There exists a Turing transducer $T_1 \in \mathcal{T}$ 
	   	  	     such that $\Gamma(\mathbb{Z}_2 \wr \mathbb{Z},S_1)$
	   	  	     is presented by $T_1$ and $\ell_n \asymp \sqrt{n}$.
	   \end{enumerate}	
	\end{lemma}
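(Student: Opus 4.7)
The plan is to handle (a) by combining Proposition \ref{lemma2dyubina99} with the classical range estimate for a simple random walk on $\mathbb{Z}$, and then to handle (b) by exhibiting an automatic presentation of $\Gamma(\mathbb{Z}_2 \wr \mathbb{Z}, S_1)$ in which the length of the representative word of every element is within multiplicative constants of its Cayley length.

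For part (a) I would fix $H = \mathbb{Z}$ with $S_H = \{t\}$ and take $S_1 = Q \cup Q^{-1}$ with $Q = \{t, th, ht, hth\}$ exactly as in the paragraph preceding Proposition \ref{lemma2dyubina99}. Proposition \ref{lemma2dyubina99} then yields $E_{\mu^{*n}}[\ell_{S_1}] \asymp E_{\mu^{*n}}[R_n]$, where $R_n$ is the range of an $n$-step simple symmetric random walk on $\Gamma(\mathbb{Z}, P)$ with $P = \{t, t^{-1}\}$. The classical Dvoretzky--Erd\H{o}s estimate (see also \cite{Spitzerbook}) gives $E_{\mu^{*n}}[R_n] \asymp \sqrt{n}$, which settles (a).

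For part (b) I would design a regular language $L_1$ and a bijection $\psi_1 : L_1 \to \mathbb{Z}_2 \wr \mathbb{Z}$ satisfying $|w| \asymp \ell_{S_1}(\psi_1(w))$. Given $(f,n)$, set $L = \min(\mathrm{supp}(f) \cup \{0, n\})$ and $R = \max(\mathrm{supp}(f) \cup \{0, n\})$, so $L \leqslant 0 \leqslant R$ and $L \leqslant n \leqslant R$. Encode $(f,n)$ as the word of length $R - L + 1$ whose $i$th letter records the lamp value $f(L+i-1) \in \{0,1\}$ together with a marker indicating whether the position $L+i-1$ is the origin, the cursor, both, or neither. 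The set of all such words is regular (each marker occurs exactly once, and the leftmost/rightmost letter must carry a marker or a $1$ so that $L, R$ are tight). A straightforward case analysis verifies that, for each $s \in S_1$, the graph of the induced map on $L_1$ is recognized by a synchronous two-tape finite automaton: every generator acts locally near the cursor by flipping one or two lamps and shifting the cursor marker by one position, with only a bounded extension or contraction of the string at the extremes. Applying Theorem \ref{automatatotranstheorem} then produces the required transducer $T_1 \in \mathcal{T}$ presenting $\Gamma(\mathbb{Z}_2 \wr \mathbb{Z}, S_1)$.

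It remains to extract the asymptotics of $\ell_n$. By construction $|w| = R - L + 1$, while a shortest walk from $e$ to $(f,n)$ in $\Gamma(\mathbb{Z}_2 \wr \mathbb{Z}, S_1)$ must cover the interval $[L, R]$ and end at $n$; thus $R - L \leqslant \ell_{S_1}(f,n) \leqslant 3(R - L) + O(1)$. Hence $|w| \asymp \ell_{S_1}(\psi_1(w))$, and averaging with respect to $\mu^{*n}$ gives $\ell_n = E_{\mu^{*n}}[|w|] \asymp E_{\mu^{*n}}[\ell_{S_1}] \asymp \sqrt{n}$ by (a). The main obstacle is the balance demanded in the encoding: most Cayley automatic presentations of exponential-growth groups (including the familiar $\log$-length binary encoding of the lamplighter) give $|w| \asymp \log \ell_{S_1}(g)$, so the novelty is to produce a \emph{unary-like}, interval-based representation that simultaneously is regular, supports synchronous two-tape automata for each generator, and has length proportional to the Cayley distance.
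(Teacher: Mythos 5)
Your proof is correct, and part (a) is exactly the paper's argument: specialize Proposition~\ref{lemma2dyubina99} to $H=\mathbb{Z}$ with $Q_1=\{t,th,ht,hth\}$ and invoke $E_{\mu^{*n}}[R_n]\asymp\sqrt{n}$ for the walk on $\mathbb{Z}$. For part (b) you take a genuinely more self-contained route. The paper does not build the presentation by hand: it cites its earlier result (\cite[Theorem~2]{berdinskykhoussainov15}) giving an automatic presentation $\psi_1\colon L_1\to\mathbb{Z}_2\wr\mathbb{Z}$ with respect to the \emph{smaller} generating set $S_1'=\{t,t^{-1},h\}$ together with the explicit bounds $\tfrac13\ell_{S_1'}(g)+\tfrac23\leqslant|w|\leqslant\ell_{S_1'}(g)+1$, transfers these to $S_1$ via the bi-Lipschitz comparison $\tfrac12\ell_{S_1}\leqslant\ell_{S_1'}\leqslant 3\ell_{S_1}$, and only then applies Lemma~\ref{fromautomatatotransprop1} to get a $9$-tape transducer. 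You instead construct the interval-based (unary-like) encoding directly for $S_1$ and prove $|w|=R-L+1\asymp\ell_{S_1}(g)$ by the cursor-path covering argument; that argument is sound (every generator in $S_1$ moves the cursor by exactly one step, so the path must cover $[L,R]$, giving the lower bound, and a sweep gives the upper bound). What your version buys is independence from the external citation and a transparent reason why the length is linear rather than logarithmic; what it costs is that the load-bearing claim that the language and each generator's graph are synchronously regular is compressed into ``a straightforward case analysis'' --- this is believable (the word lengths of $w$ and $s\cdot w$ differ by at most one, and a one-letter shift plus a bounded local edit is a synchronous relation), but it is precisely the content that the paper outsources to \cite{berdinskykhoussainov15}, so a complete write-up would need to carry it out. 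Your final averaging step recovers the paper's conclusion $\ell_n=E_{\mu^{*n}}[|w|]\asymp\sqrt{n}$; note only that the lemma as used later also records that $b_n$ is exponential, which both approaches get for free from the exponential growth of $\mathbb{Z}_2\wr\mathbb{Z}$.
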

	\begin{proof}
	Let us consider  the lamplighter group $\mathbb{Z}_2 \wr \mathbb{Z}$. 
    Let $t$ be a generator of the subgroup 
    $\mathbb{Z} \leqslant \mathbb{Z}_2 \wr \mathbb{Z}$ and  
    $h \in \mathbb{Z}_2 ^{(\mathbb{Z})} \leqslant \mathbb{Z}_2 \wr \mathbb{Z}$ 
    be the function $h : \mathbb{Z} \rightarrow \mathbb{Z}_2$ 
    such that $h(z)= e$ if $z \neq 0$ and $h(0) = a$.
    Let $Q_1 = \{t,th,ht,hth\}$ be the set of generators of 
    $\mathbb{Z}_2 \wr \mathbb{Z}$ 
	and $S_1 = Q_1 \cup Q_1^{-1}$.
	For a simple symmetric random walk on $\Gamma(\mathbb{Z}, \{t,t^{-1}\})$,
    $E_{\mu^{*n}}[R_n] \sim \sqrt{n}$, 
    where $\sim$ here means asymptotic equivalence. 
    For the proof see, e.g., \cite{Spitzerbook}.
	Therefore, from Proposition \ref{lemma2dyubina99}  
	we obtain that for a simple symmetric random walk on 
	$\Gamma(\mathbb{Z}_2 \wr \mathbb{Z},S_1)$, 
	$E_{\mu^{*n}}[\ell_{S_1}] \asymp \sqrt{n}$.
    
    Let $Q_1' = \{t,h\}$ be a set of generators of 
    $\mathbb{Z}_2 \wr \mathbb{Z}$.  
    Put $S_1' = Q_1' \cup Q_1'^{-1} = \{t,t^{-1}, h\}$.
    In \cite[Theorem~2]{berdinskykhoussainov15} we 
	constructed an automatic presentation of the Cayley  
    graph $\Gamma (\mathbb{Z}_2 \wr \mathbb{Z}, S_1')$, 
    the bijection $\psi_1 : L_1 \rightarrow \mathbb{Z}_2 \wr \mathbb{Z}$, 
    for which the inequalities 
    $\frac{1}{3} \ell_{S_1'} (g) + \frac{2}{3} 
    \leqslant |w| \leqslant \ell_{S_1'} (g) + 1$ 
    hold for all $g \in \mathbb{Z}_2 \wr \mathbb{Z}$, 
    where $L_1$ is a regular language, $w = \psi_1^{-1}(g) \in L_1$ is 
    the word corresponding to $g$ and $|w|$ is the length of $w$.
    It is easy to see that
    $\frac{1}{2} \ell_{S_1} (g) \leqslant \ell_{S_1'} (g) 
	 \leqslant 3  \ell_{S_1} (g)$. 
    Therefore, we obtain that 
    $\frac{1}{6} \ell_{S_1} (g) + \frac{2}{3} \leqslant |w| \leqslant 
    3 \ell_{S_1} (g) + 1$ for all $g \in \mathbb{Z}_2 \wr \mathbb{Z}$. 
	  This implies that 
    $\frac{1}{6} E_{\mu^{*n}}[\ell_{S_1}] + \frac{2}{3} \leqslant  
    E_{\mu^{*n}}[|w|] \leqslant 3 E_{\mu^{*n}}[\ell_{S_1}] + 1$.       
    The bijection 
    $\psi_1 : L_1 \rightarrow \mathbb{Z}_2 \wr \mathbb{Z}$
    provides an automatic presentation 
    for the Cayley graph 
    $\Gamma (\mathbb{Z}_2 \wr \mathbb{Z}, S_1)$. 
    By Lemma \ref{fromautomatatotransprop1},
    we obtain that there exists 
    a $9$--tape Turing  transducer $T_1 \in \mathcal{T}$ 
    translating the language $L_1$ into some language $L_1' \subseteq L_1 ^8$ 
    for which 
    $\Gamma (\mathbb{Z}_2 \wr \mathbb{Z}, S_1)$ is presented by $T_1$.    
	  Therefore, we obtain that for  $T_1$,  
    $\ell_n \asymp \sqrt{n}$.   	  	
    Since the growth function of the group $\mathbb{Z}_2 \mathrel{\wr} \mathbb{Z}$ is exponentinal,     
		the growth function $b_n$ of $T_1$ is exponential. $\Box$ 
	\end{proof}     
      It is easy to give examples of  Turing transducers 
      of the class $\mathcal{T}$ for which $\ell_n \asymp n$
      and the growth function $b_n$ is exponential. 
      See Example \ref{example_ln_Fn} below.   
      \begin{example}
      	\label{example_ln_Fn}    
      	Let $F_m$ be the free group 
      	over $m$ generators $s_1,\dots,s_m$. 
      	Put $Q = \{s_1, \dots, s_m\}$ and  $S = Q \cup Q^{-1}$.  
      	There exists a natural automatic presentation of the Cayley
      	graph $\Gamma(F_m,S)$, the bijection $\psi: L \rightarrow F_m$, 
      	for which $L$ is the language of all reduced words over the alphabet $S$. 
      	In particular, the empty word $\epsilon$ represents the identity 
      	$e \in F_m$. 
      	The bijection $\psi$ maps a word $w \in L$ into the corresponding 
      	group element of $F_m$. It is clear that $\ell_S (g) = |w|$, where
      	$w = \psi^{-1} (g)$. For a symmetric simple random walk on 
      	$\Gamma (F_m,S) $, $E_{\mu^{*n}} [\ell_S] \asymp n$. 
      	%For the proof see, e.g., \cite{Spitzerbook}. 
      	Therefore, $E_{\mu^{*n}}[|w|] \asymp n$.  
      	Therefore, for each $m>1$ we obtain the pair 
      	$(T,\epsilon), T \in \mathcal{T}$ 
      	for which $\ell_n \asymp n$.  
				Since the growth function of the free group $F_m$ is exponential,  
				the growth function $b_n$ of the pair $(T,\epsilon)$ is exponential. $\Box$
      \end{example}     
   Is there a Turing transducer of the class $\mathcal{T}$ 
   for which $\ell_n$ grows between $\sqrt{n}$ and $n$? 
   We will answer on this question positively in Theorem \ref{theorem1ln}
   which follows from Proposition \ref{randomwalklamplighterprop2} below.

    Let $G$ be a group  with a set of generators $S_G  = \{g_1,\dots,g_m\}$. 
    Put $P = S_G \cup S_G ^{-1}$. 
    Assume that for a symmetric simple random walk on 
    $\Gamma(G,P)$, $\ell_n (\mu) \asymp n^\alpha$ 
    for some $0 < \alpha \leqslant 1$.	 	    
    Consider the wreath product $G \wr \mathbb{Z}$.
    Let $t$ be a generator of the subgroup 
    $\mathbb{Z} \leqslant G \wr \mathbb{Z}$. 
    Let $h_{i} \in G^{(\mathbb{Z})} \leqslant G \wr \mathbb{Z}$,  
    $i = 1, \dots, m$ 
    be the functions $h_{i}: \mathbb{Z} \rightarrow G$ 
    such that $h_{i} (z) = e$ if $z \neq 0$ and 
    $h_{i} (0) = g_i$. 
    Put $Q= \{h_{i}^p t h_{j}^q \, | \, i,j = 1,\dots,m;p,q=-1,0,1 \}$ 
    to be the set of generators of the group $G \wr \mathbb{Z}$ and
    $S = Q \cup Q^{-1}$. Consider a $n$--step random walk on 
    $\Gamma (G\wr  \mathbb{Z},S)$. 
    The following proposition shows asymptotic behavior of $E_{\mu^{*n}}[\ell_S]$. 
     
    \begin{proposition}
    \label{randomwalklamplighterprop2}	
  	Let $G$, $S$ and $\alpha$ be as above.
  	For a symmetric simple random walk on 
  	$\Gamma(G \wr \mathbb{Z},S)$,
  	$E_{\mu^{*n}}[\ell_S] \asymp n^\frac{1+\alpha}{2}$.			
    \end{proposition}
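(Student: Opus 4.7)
The plan is to reduce the analysis of the random walk on $\Gamma(G \wr \mathbb{Z}, S)$ to two simpler objects: a simple random walk on $\mathbb{Z}$, and an independent random walk on $G$ performed at each visited site.

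First I would describe the effect of a single step. Because every generator in $S$ has the form $h_i^p t^{\pm 1} h_j^q$ with $p, q \in \{-1, 0, 1\}$, right-multiplying $(f, z)$ by such a generator shifts the $\mathbb{Z}$-coordinate by $\pm 1$ and alters $f$ only at the two adjacent sites $z$ and $z \pm 1$, multiplying each of those values by an independent uniform element of $\{g^p : g \in S_G,\ p \in \{-1,0,1\}\}$ or its inverse. Consequently, the $\mathbb{Z}$-projection $(Z_j)_{0 \leqslant j \leqslant n}$ of the walk is an ordinary simple random walk; and, conditionally on this trajectory, the value $f(x)$ is the product of $\Theta(L_n(x))$ independent $G$-elements drawn from a symmetric distribution supported on $S_G \cup S_G^{-1} \cup \{e\}$, where $L_n(x)$ denotes the local time of $(Z_j)$ at $x$. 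By the hypothesis $E_{\mu^{*k}}[\ell_P] \asymp k^\alpha$ on $\Gamma(G,P)$, this gives $E[\ell_P(f(x)) \mid L_n(\cdot)] \asymp L_n(x)^\alpha$.

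Second, I would invoke the standard wreath-product length estimate adapted to this generating set:
$$\ell_S(f, z) \asymp R(\mathrm{supp}(f) \cup \{0, z\}) + \sum_{x \in \mathbb{Z}} \ell_P(f(x)),$$
where $R(\cdot)$ is the length of a shortest $\mathbb{Z}$-path from $0$ to $z$ that visits all listed sites. The lower bound $\ell_S \gtrsim \sum_x \ell_P(f(x))$ is immediate, since each letter of $S$ perturbs $f$ at only two adjacent sites and by an element of $\ell_P$-norm at most $1$; the upper bound is realized by the obvious sweep strategy between the extreme points of $\mathrm{supp}(f) \cup \{0, z\}$, interposing a geodesic for $f(x)$ at each support site. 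Taking expectations reduces the problem to
$$E_{\mu^{*n}}[\ell_S] \asymp E[R_n] + E\Bigl[\sum_x L_n(x)^\alpha\Bigr] \asymp \sqrt{n} + E\Bigl[\sum_x L_n(x)^\alpha\Bigr],$$
so it suffices to show $E\bigl[\sum_x L_n(x)^\alpha\bigr] \asymp n^{(1+\alpha)/2}$.

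Finally, I would establish that estimate for simple random walk on $\mathbb{Z}$. The upper bound is a direct application of concavity: since $\sum_x L_n(x) = n$ and $L_n$ is supported on at most $R_n$ sites, $\sum_x L_n(x)^\alpha \leqslant R_n^{1-\alpha} n^\alpha$, and another application of Jensen's inequality combined with $E[R_n] \asymp \sqrt{n}$ yields $E[\sum_x L_n(x)^\alpha] \lesssim n^{(1+\alpha)/2}$. The main obstacle is the matching lower bound, which must rule out pathological concentration of local time at a single site. I would obtain it from standard anticoncentration for simple random walk on $\mathbb{Z}$: there exist constants $c_1, c_2, c_3 > 0$ independent of $n$ such that $P(L_n(\lfloor y\sqrt{n}\rfloor) \geqslant c_1\sqrt{n}) \geqslant c_2$ for every $|y| \leqslant c_3$, via the Ray-Knight description of the local-time process or via a direct excursion-theoretic computation. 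Integrating in $y$ by Fubini then yields $E[\sum_x L_n(x)^\alpha] \gtrsim \sqrt{n} \cdot (\sqrt{n})^\alpha = n^{(1+\alpha)/2}$, completing the proof.
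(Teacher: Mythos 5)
Your argument is correct in substance, but it is worth noting that the paper does not prove this proposition at all: its entire proof is the citation ``see \cite[Lemma~3]{Erschler04}.'' What you have written is essentially a self-contained reconstruction of the argument behind that cited lemma (which goes back to Erschler and, for $\mathbb{Z}_2$-lamps, to Dyubina): project to a simple random walk on $\mathbb{Z}$, observe that conditionally on that trajectory each lamp $f(x)$ is a product of $\Theta(L_n(x))$ i.i.d.\ symmetric increments so that $E[\ell_P(f(x))\mid L_n]\asymp L_n(x)^{\alpha}$, combine this with the two-sided wreath-product length estimate $\ell_S(f,z)\asymp R+\sum_x\ell_P(f(x))$, and finish with $E\bigl[\sum_x L_n(x)^{\alpha}\bigr]\asymp n^{(1+\alpha)/2}$ via H\"older for the upper bound and local-time anticoncentration for the lower bound. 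The decomposition, the role of the generating set $\{h_i^p t h_j^q\}$ in making the lamp increments at the two adjacent sites independent and uniform, and the final local-time computation all match the standard proof, so your route is not different in kind --- it simply supplies what the paper outsources. Two details deserve a sentence each if you write this up: the induced walk on each lamp group is \emph{lazy} (the exponent $p$ or $q$ equals $0$ with probability $1/3$), so transferring the hypothesis $E_{\mu^{*k}}[\ell_P]\asymp k^{\alpha}$ from the uniform measure on $P$ to the lazy one requires conditioning on the binomially concentrated number of non-trivial steps; and the bound $E[\ell_P(f(x))\mid L_n]\asymp L_n(x)^{\alpha}$ must hold with constants uniform over all values of $L_n(x)$, including $O(1)$ ones, which is true but should be said.
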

    \begin{proof}
       For the proof see \cite[Lemma~3]{Erschler04}. $\Box$
    \end{proof}
  
  	\begin{theorem}
  	\label{theorem1ln}   
  	   For every $\alpha < 1$ there exists a Turing transducer  
  	   $T \in \mathcal{T}$ for which $\ell_n \asymp n^\beta$ 
  	   for some $\beta$ such that $\alpha < \beta < 1$ and 
  	   the growth function $b_n$ is exponential.  	   
  	\end{theorem}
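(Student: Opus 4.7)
The plan is to iterate Proposition \ref{randomwalklamplighterprop2}, starting from the lamplighter group $\mathbb{Z}_2 \wr \mathbb{Z}$ constructed in Lemma \ref{lamplighterexampleln}. Define inductively $H_0 = \mathbb{Z}_2 \wr \mathbb{Z}$ and $H_{j+1} = H_j \wr \mathbb{Z}$, equipped with the generating sets $S^{(0)} = S_1$ and $S^{(j+1)} = Q^{(j+1)} \cup (Q^{(j+1)})^{-1}$, where $Q^{(j+1)}$ is the ``lamplighter--style'' set $\{h_i^p\, t\, h_k^q \mid g_i, g_k \in S^{(j)},\ p,q \in \{-1,0,1\}\}$ prescribed by Proposition \ref{randomwalklamplighterprop2}. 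A straightforward induction using Proposition \ref{randomwalklamplighterprop2} and the base case $E_{\mu^{*n}}[\ell_{S^{(0)}}] \asymp n^{1/2}$ (from Lemma \ref{lamplighterexampleln}(a)) yields $E_{\mu^{*n}}[\ell_{S^{(j)}}] \asymp n^{\alpha_j}$ with $\alpha_j = 1 - 2^{-(j+1)}$. Given $\alpha < 1$, choose $i$ with $\alpha_i > \alpha$ and set $\beta = \alpha_i$.

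Next I would produce a Turing transducer $T \in \mathcal{T}$ presenting $\Gamma(H_i, S^{(i)})$ together with a bijection $\psi \colon L \to H_i$ satisfying a bi-Lipschitz bound
\begin{equation*}
c_1 \ell_{S^{(i)}}(g) - c_1' \ \leqslant\ |\psi^{-1}(g)|\ \leqslant\ c_2 \ell_{S^{(i)}}(g) + c_2'
\end{equation*}
for constants $c_1, c_2 > 0$ and $c_1', c_2' \geqslant 0$. The existence of \emph{some} automatic presentation of $H_i$ follows from \cite[Theorem~3]{berdinskykhoussainov15}; what is needed is that the bijection it provides is bi-Lipschitz with respect to a simpler generating set $(S^{(i)})'$ (the analog of $S_1' = \{t,t^{-1},h\}$ in Lemma \ref{lamplighterexampleln}), and then to pass to $S^{(i)}$ using the elementary fact that word lengths with respect to any two finite generating sets of the same group are linearly comparable. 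This is the step that mimics the passage from $S_1'$ to $S_1$ in the proof of Lemma \ref{lamplighterexampleln}. Once $\psi$ is known to be bi-Lipschitz, applying Lemma \ref{fromautomatatotransprop1} converts the presentation into a Turing transducer $T \in \mathcal{T}$.

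From the bi-Lipschitz bound and the Claim preceding Proposition \ref{lemma2dyubina99} (which identifies $\ell_n$ with $E_{\mu^{*n}}[|w|]$), one deduces
\begin{equation*}
\tfrac{1}{c_2} E_{\mu^{*n}}[\ell_{S^{(i)}}] - \tfrac{c_2'}{c_2} \ \leqslant\ \ell_n\ \leqslant\ c_1 E_{\mu^{*n}}[\ell_{S^{(i)}}] + c_1',
\end{equation*}
so $\ell_n \asymp n^\beta$ as desired. Finally, $H_i$ contains $\mathbb{Z}_2 \wr \mathbb{Z}$ and hence has exponential growth, so by the Claim in Subsection \ref{subsectionfolner} the growth function $b_n$ of $T$ is exponential.

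The main obstacle is the second paragraph: verifying the bi-Lipschitz property of the automatic presentation of $H_i = ((\mathbb{Z}_2 \wr \mathbb{Z}) \wr \mathbb{Z}) \wr \cdots \wr \mathbb{Z}$ at the level of iterated wreath products. One expects the construction of \cite{berdinskykhoussainov15}, which encodes an element of $A \wr \mathbb{Z}$ by concatenating representatives of the ``lamp'' entries together with a $\mathbb{Z}$-coordinate, to preserve the quasi-isometry type of the word metric inductively, but writing this down cleanly requires keeping track of how the representatives of nested lamps are synchronized and padded, which is precisely where non-bi-Lipschitz blowup would have to be ruled out at each of the $i$ levels.
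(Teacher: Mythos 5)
Your proposal follows essentially the same route as the paper's proof: iterating Proposition \ref{randomwalklamplighterprop2} along the tower $G_{m+1}=G_m\wr\mathbb{Z}$ starting from the lamplighter group of Lemma \ref{lamplighterexampleln}, importing a bi-Lipschitz automatic presentation from \cite[Theorem~3]{berdinskykhoussainov15}, passing between the two generating sets by linear comparability of word metrics, and converting to a transducer via Lemma \ref{fromautomatatotransprop1}. The bi-Lipschitz estimate you flag as the main obstacle is exactly what the paper takes as given from that cited theorem, so no additional verification is carried out there.
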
	
   \begin{proof}
   Let us consider the sequence of wreath products $G_m, m = 1,\dots,\infty$
  such that $G_1 = \mathbb{Z}_2 \wr \mathbb{Z}$ and 
  $G_{m+1} = G_m \wr \mathbb{Z}$, $m \geqslant 1$. 
  From Lemma \ref{lamplighterexampleln}~$(a)$  and Proposition \ref{randomwalklamplighterprop2} 
  we obtain that for every $m > 1$ there exists a proper set of
  generators $Q_m \subseteq G_m$ such that for a symmetric simple 
	random walk on the Cayley graph $\Gamma (G_m, S_m)$, 
  $E_{\mu^{*n}}[\ell_{S_m}] \asymp n^{1 - \frac{1}{2^m}}$, 
  where $S_m = Q_m \cup Q_m ^{-1}$.    	  
  It follows from \cite[Theorem~3]{berdinskykhoussainov15} 
  that for every $m>1$ there is an automatic presentation 
  of the Cayley graph $\Gamma (G_m,S_m')$, the bijection $\psi_m: L_m \rightarrow G_m$, 
  for which the inequalities 
  $\delta_1' \ell_{S_m'} (g) + \lambda_1'  \leqslant |w| 
   \leqslant \delta_2' \ell_{S_m'} (g)  + \lambda_2'$  
  hold for all $g \in G_m$ for some constants
  $\delta_2' > \delta_1' >0, \lambda_1', \lambda_2'$, 
  where $L_m$ is a regular language and
  $S_m' = Q_m' \cup {Q_m'}^{-1}$ for some proper set of generators 
  $Q_m' \subseteq G_m$, and 
  $w = \psi_m ^{-1}(g)$ is the word representing $g$. 
  Therefore, the inequalities 
  $\delta_1 \ell_{S_m} (g) + \lambda_1 \leqslant |w| 
	 \leqslant \delta_2 \ell_{S_m} (g) + \lambda_2$ 
  hold for all $g\in G_m$ for 
  some constants $\delta_2>\delta_1>0,\lambda_1,\lambda_2$.
  This implies that 
  $\delta_1 E_{\mu^{*n}}[\ell_{S_m}] + \lambda_1 \leqslant E_{\mu^{*n}}[|w|] \leqslant \delta_2 E_{\mu^{*n}} [\ell_{S_m}] + \lambda_2$. 
  Therefore, 
  $E_{\mu^{*n}}[|w|] \asymp n^{1 - \frac{1}{2^m}}$.
  
  For every $m>1$ the bijection $\psi_m : L_m \rightarrow G_m$ 
  provides an automatic presentation of the Cayley graph 
  $\Gamma (G_m, S_m)$. It follows from Lemma 
  \ref{fromautomatatotransprop1} that there is a 
  $(k_m+1)$--tape Turing transducer $T_m \in \mathcal{T}$ 
  translating the language $L_m$ into 
  $L_m' \subseteq L_m ^{k_m}$ for which, after proper relabeling,
	$\Gamma_{T_m} \cong \Gamma(G_m,S_m)$. The numbers 
	$k_m, m = 1, \dots, \infty$ can be
  obtained recurrently as follows.   
  It is easy to see that $k_{m+1} = 2  (k_m + 1)^2$ for $m \geqslant 1$  
  and $k_1 = 8$, which is simply the number 
  of elements in $S_1$ (see Lemma \ref{lamplighterexampleln}). 
	  So, we obtain that for  $T_m, m>1$, 
    $\ell_n \asymp n^{1-\frac{1}{2^m}}$. 
		For every $m>1$, since the growth function of the group $G_m$ is exponential, 
		the growth function $b_n$ of $T_m$ is exponential. $\Box$ 
  \end{proof}
 % \begin{remark}
%	The reader can construct other examples of Turing transducers  
%	which present Cayley graphs of groups 
%	$(\dots(G \wr \mathbb{Z}) \wr \dots \wr \mathbb{Z}) \wr \mathbb{Z}$ %when, for instance, 
%	$G$ is a finite group or a finitely generated abelian group. $\Box$
%  \end{remark}	
 %    \begin{theorem}
 %    \label{theorem1ln}	
 %    	There exist Turing transducers of the class $\mathcal{T}$ 
 %    	for which $\ell_n \asymp n ^ d$, $d=1$ or 
 %    	$d = 1 - \frac{1}{2^m}$, where $m \geqslant 1$ is an integer. 
 %    \end{theorem}    

%	 Let us consider an example of a Turing transducer 
%	 from $\mathcal{T}_p$.
%	 \begin{example}
%	 {\it  
%	   Consider a $5$--tape 
%	   checking stack 
%	   Turing transducer $T \in \mathcal{T}_p$
%	   for which the Cayley graph of the group  $\mathbb{Z}_2 \wr F_2$ 
%	   is presented by $T$ (see the proof of  %\cite[Theorem~5]{berdinskykhoussainov15}). 
%	   where $S'= \{s_1,s_2,s_1 ^{-1},s_2 ^{-1}, h\}$. 
%	   Parry showed  that the growth function 
%	   of this Cayley graph %$\Gamma(\mathbb{Z}_2 \wr F_2,S')$ 
%	   is not rational \cite{Parry92}. 
%	   Therefore, we obtain that for this Turing transducer $T$ 
%	   the growth function $b_n,n=0,\dots,\infty$ is not rational.} 
%     \end{example}
	
\section{Discussion}
\label{discussionSection}

     In this paper we addressed the problem of finding 
     characterizations of Cayley automatic groups.
     Our approach was to define and then study three numerical 
     characteristics of Turing transducers of the special class 
     $\mathcal{T}$.  
     This class of Turing transducers was obtained from automatic 
		 presentations of labeled directed graphs.      
     The numerical characteristics that we defined are the analogs 
		 of growth functions, F{\o}lner functions and drifts of simple random walks  
		 for Cayley graphs of groups.    
   %  However, they are in terms of Turing transducers 
   %  themselves rather than the graphs they represent.      
     We hope that further study of asymptotic behavior of 
     these three numerical characteristics of Turing transducers
     of the class $\mathcal{T}$ will yield some characterizations 
     for Cayley automatic groups.

	 Two open questions are apparent from the results of Section \ref{sectionExamples}.     
     
	   \begin{itemize}
	   	\item{Theorem \ref{theorem_fn} shows 
			that for every integer $i \geqslant 1$
			there exists a Turing transducer of the class $\mathcal{T}$ 
	 		for which $f_n \sim n^{(n^i)}$. 
			Is there a Turing transducer 
	  	$T \in \mathcal{T}$ for which the F{\o}lner function 	    
	   	grows faster than  $n^{(n^i)}$ for all $i \geqslant 1$?} 	 
	 	  % (or, $f_n \sim n^{\left(n\dots^{n}\right)}$
	 	  %for any iterated exponent)? 	 		
	   \end{itemize}

     \begin{itemize}
       \item{Theorem \ref{theorem1ln} tells us that for  every $\alpha < 1$ there exists a Turing transducer  $T \in \mathcal{T}$ for which $\ell_n \asymp n^\beta$ 
       	for some $\beta$ such that $\alpha < \beta < 1$. Is there a Turing transducer 
       	$T \in \mathcal{T}$ for which $\ell_n$ grows faster than  $n^\alpha$ for every $\alpha<1$  but slower than $n$?} 
       	%(say, for example, $\ell_n \asymp \frac{n}{\ln n}$ or 
       	%$\frac{n}{\ln \ln n}$)       	
     \end{itemize}	     

\section*{Acknowledgments}
The author thanks Bakhadyr Khoussainov and the anonymous reviewers for 
useful suggestions. The author thanks Sunny Daniels for proofreading 
a draft of this paper and making several changes to it.

\bibliographystyle{splncs03}
    
\bibliography{bibliography}

\end{document}